\theoremstyle{plain}
\newtheorem{maintheorem}{Theorem}
\newtheorem{maincorollary}[maintheorem]{Corollary}
\newtheorem{theorem}{Theorem}[section]
\newtheorem{proposition}[theorem]{Proposition}
\newtheorem{corollary}[theorem]{Corollary}
\newtheorem{lemma}[theorem]{Lemma}
\newtheorem{remark}[theorem]{Remark}
\theoremstyle{definition}
\newtheorem{definition}{Definition}
\newcommand{\RR}{{\mathbb R}}
\newcommand{\X}{{\EuScript{X}}}
\newcommand{\cG}{{\mathcal G}}
\newcommand{\cM}{{\mathcal M}}
\newcommand{\cO}{{\mathcal O}}
\newcommand{\cP}{{\mathcal P}}
\newcommand{\V}{{\mathcal V}}
\newcommand{\vfi}{{\varphi}}
\renewcommand{\epsilon}{\varepsilon}
\newcommand{\qand}{\quad\text{and}\quad}
\DeclareMathOperator{\diam}{diam}
\DeclareMathOperator{\sing}{Sing}
\DeclareMathOperator{\sgn}{sgn}
\DeclareMathOperator{\m}{Leb}
\DeclareMathOperator{\close}{Closure}
\theoremstyle{remark}
\title[Statistical Stability of the Contracting Lorenz attractor]{On
  the statistical stability of families of attracting sets and the
  contracting Lorenz attractor}
\thanks{
  The author was partially supported by
  CNPq-Brazil (grant 300985/2019-3).}
\date{\today}
\author[Vitor Araujo]{Vitor Araujo}
\email{vitor.araujo.im.ufba@gmail.com, vitor.d.araujo@ufba.br}
\urladdr{https://sites.google.com/site/vdaraujo99/}
\address{Instituto de Matem\'atica e Estat\'{\i}stica,
  Universidade Federal da Bahia, Av. Ademar de Barros s/n,
  40170-110 Salvador, Brazil.}
\keywords{contracting Lorenz attractor, Rovella attractor,
  physical/SRB measure, equilibrium state, statistical stability,
  Entropy Formula}
\subjclass[2010]{Primary: 37D45. Secondary: 37D30, 37D25, 37D35.}
\begin{document}

\begin{abstract}
  We present criteria for statistical stability of attracting sets for
  vector fields using dynamical conditions on the corresponding
  generated flows. These conditions are easily verified for all
  singular-hyperbolic attracting sets of $C^2$ vector fields using
  known results, providing robust examples of statistically stable
  singular attracting sets (encompassing in particular the Lorenz and
  geometrical Lorenz attractors). These conditions are shown to hold
  also on the persistent but non-robust family of contracting Lorenz
  flows (also known as Rovella attractors), providing examples of
  statistical stability among members of non-open families of
  dynamical systems. In both instances, our conditions avoid the use
  of detailed information about perturbations of the one-dimensional
  induced dynamics on specially chosen Poincar\'e sections.
\end{abstract}

\maketitle

\tableofcontents

%%%%%%%%%%%%%%%%%%%%%%%%%%%%%%%%%%%%%%%%%%%%%%%%%%%%%%%%%%%%%%
\section{Introduction}
\label{sec:intro}

The statistical viewpoint on Dynamical Systems is one of the
cornerstones of most recent developments in dynamics. Given a flow
$\phi_t$ on a manifold $M$, a central concept is that of
\emph{physical measure}, a $\phi_t$-invariant probability measure $\mu$
whose \emph{ergodic basin}
\[
B(\mu)=\left\{x\in M:
\frac1T\int_{0}^{T}\varphi(\phi_tx)\,dt\to\int\varphi\,
d\mu\mbox{  for all continuous  } \varphi: M\to\RR\right\}
\]
has positive \emph{volume} or \emph{Lebesgue measure}, which
we write $\m$ and take as the measure associated with any
non-vanishing volume form on $M$.

This kind of measure provides asymptotic information on a
set of trajectories that one hopes is large enough to be
observable in real-world models. 

The stability of physical measures under small variations of the map
%% and under random perturbations of the
%% transformation
allows for small errors on the formulation of the transformation law
governing the dynamics not to disturb too much the long term behavior,
as measured by the most basic statistical data provided by asymptotic
time averages of continuous functions along orbits.  In principle when
considering practical systems we cannot avoid external noise, so every
realistic mathematical model should exhibit these stability features
to be able to cope with unavoidable uncertainty about the ``correct''
parameter values, observed initial states and even the specific
mathematical formulation involved.

In this note we explicitly state criteria for \emph{statistical
  stability} of families of continuous dynamical systems (flows
generated by vector fields) exhibiting not necessarily robust features
(that is, the family needs not be open in a smooth topology of vector
fields or flows) given by singular attracting sets, namely
singular-hyperbolic or contracting Lorenz models. These families of
invariant sets, containing regular trajectories accumulating
equilibria are not structurally stable, that is, cannot be seen as
different realizations of the same system under a continuous change of
coordinates; see e.g. \cite{GW79}. However, using physical measures we
can obtain stability in a statistical sense: asymptotical time
averages of continuous observables over most trajectores will vary
continuously with the underlying dynamical system.

We first apply the criteria to obtain a rather geometrical proof of
statistical stability for open families singular-hyperbolic (or
Lorenz-like) attracting sets, encompassing in particular de classical
Lorenz attractor and also the family of geometrical Lorenz attractors;
see \cite{AraPac2010} for a presentation of these systems. Our proof
for these systems takes advantage of already known results. Secondly,
we show that the non-open, but persistent, Rovella family
$\cG:X\to\X^3(\RR^3)$ of perturbation of the contracting Lorenz
attractor \cite{Ro93} also satisfies the criteria, where $X$ is a
metric space and $\cG$ is continuous with respect to the $C^3$
topology among smooth vector filds of $\RR^3$. Thus, the physical
measures on these attractors are statistically stable \emph{within the
  family}, that is, when considering perturbations along the image of
the family $\cG$.  We note that recently Alves-Khan~\cite{AlvesKhan}
showed that contracting Lorenz flows are \emph{statistically unstable
  if we consider all the nearby flows} in the $C^3$ topology, that is,
we replace $X$ by an open subset $U$ of $\X^3(\RR^3)$ containg the
contracting Lorenz attractor.

Another notion of stability is that of \emph{stochastic stability},
dealing with small random perturbations along each trajectory, which
we do not consider here, but was studies for sectional-hyperbolic and
contracting Lorenz attractors by Metzger and Morales
\cite{mtz00,MetzMor15}.

Our criteria do not assume uniqueness of the physical measure
supported on the attracting set: we deal with an at most countable
family of ergodic physical measures, as long as their ergodic basins
contain $\m$-almost all points whose trajectories accumulate on the
attracting set. Moreover, the criteria do not involve the statistical
stability of a one-dimensional quotient map induced by a certain
Poincar\'e return map, defined by a suitable choice of global
cross-section, as in the case of the previous works on statistical
stability of geometric Lorenz attractors of Alves-Soufi
\cite{AlveSoufi} and Bahsoum-Ruziboev \cite{bahsoun_ruziboev}.  Our
criteria are a mix of dynamical (robust expansiveness) and
thermodynamical (physical measures satisfy the Entropy Formula)
properties of the flow restricted to the attracting set and its
perturbations.

We mention that statistical stability and other strong properties of
the one-dimensional quotient maps (contracting Lorenz maps) mentioned
above for the Rovella attractor were obtained by Metzger~\cite{mtz001}
and Alves-Soufi~\cite{AlvesSoufi12}. Decay of correlations and other
statistical properties for the Poincar\'e return map were obtained
more recently by Galatolo-Nisoli-Pacifico~\cite{GNP18}, and a
Thermodynamical Formalism for the contracting Lorenz flow was
developed by Pacifico-Todd~\cite{PT09}.

Our results can be immediately applied to certain known families of
bifurcations giving rise to attractors belonging to these two classes: see
e.g.~\cite{Ro2000,MPS05,MPsM}. The family of systems obtained after the
unfolding of these bifurcation scenarios exhibiting
singular-hyperbolic (Lorenz-like) or contracting Lorenz attractors are
automatically statistically stable.

Similar ideas to the criteria presented here, exploring consequences
of the characterization of invariant measures satisfying the Entropy
Formula~\cite{LY85} were already used to deal with stochastic and
statistical stability of uniformly and non-uniformly expanding maps;
see e.g. \cite{Ki88,ArTah,ArTah2} and \cite{araujo2006}. A natural
notion of stability for maps with several physical measures supported
on a given attracting set was provided in \cite{AA03}.  The same
strategy was applied to obtain statistical stability for
sectional-hyperbolic attracting sets (a higher (co)dimensional
extension of the notion of singular-hyperbolicity) in \cite{Araujo19},
where the focus lies on the technically much harder task of deducing
the properties needed to apply the criteria, due to the high
dimensionality of the objects involved.

%%%%%%%%%%%%%%%%%%%%%%%%%%%%%%%%%%%%%%%%%%%%%%%%%%%%%%%%%%%%

\subsection{Statements of the results}
\label{sec:statements-results}

Let $M$ be a compact connected Riemannian manifold with dimension
$\dim M=m$, induced distance $d$ and volume form $\m$. Let $\X^r(M)$,
$r\ge1$, be the set of $C^r$ vector fields on $M$ endowed with the
$C^r$ topology and denote by $\phi_t$ the flow generated by
$G\in\X^r(M)$.

\subsubsection{Preliminary definitions}
\label{sec:prelim-definit}

An \emph{invariant set} $\Lambda$ for the flow $\phi_t$ generated by
the vector field $G\in\X^r(M)$, for some fixed $r\ge2$, is a subset of
$M$ which satisfies $\phi_t(\Lambda)=\Lambda$ for all $t\in\RR$.
Given a compact invariant set $\Lambda$ for $G\in \X^r(M)$, we say
that $\Lambda$ is \emph{isolated} if there exists an open set
$U\supset \Lambda$ such that
$ \Lambda =\bigcap_{t\in\RR}\close{\phi_t(U)}$.  If $U$ can be chosen
so that $\close{\phi_t(U)}\subset U$ for all $t>0$, then we say that
$\Lambda$ is an \emph{attracting set} and $U$ a \emph{trapping region}
(or \emph{isolated neighborhood}) for
$\Lambda=\Lambda_G(U)=\cap_{t>0}\close{\phi_t(U)}$.

We note that every attracting set admits a natural continuation, since
there exists a neighborhood $\V$ of $G$ in $\X^r(M)$ so that
$\close{\phi^Y_t}(U)\subset U$ for all $t>0$ and each $Y\in\V$, where
$(\phi_t^Y)_{t\in\RR}$ is the flow generated by $Y$, and so we may
consider the attracting set $\Lambda_Y(U)$.

Physical measures are related to equilibrium states of a certain
potential function. Let $\psi : M \rightarrow \RR$ be a continuous
function.  Then a $\phi_t$-invariant probability measure $\mu$ is a
\emph{equilibrium state for the potential $\psi$} if
\[
P_{G}(\psi)= h_{\mu}(\phi_1) + \int \psi \, d\mu,
\quad\mbox{where}\quad
P_{G}(\psi)=\sup_{\nu \in \cM}
\left\{ h_{\nu}(\phi_1) + \int \psi \, d\nu \right\},
\]
and $\cM$ is the set of all $\phi_t$-invariant probability measures.
The quantity $P_{G}(\phi)$ is called the \emph{Topological Pressure}
and the identity on the right hand side is a consequence of the
\emph{Variational Principle}; see e.g. \cite{Wa82} for definitions of
entropy $h_\mu(\phi_1)$ and topological pressure $P_{G}(\psi)$.

A sign of chaoticity in an attracting set of a vector field is the
property of \emph{expansiveness}. 
  Denote by $S(\RR)$ the set of surjective increasing continuous
  functions $h:\RR\to\RR$. We say that the flow is \emph{expansive} if
  for every $\epsilon>0$ there is $\delta>0$ such that, for any
  $h\in S(\RR)$
  \begin{align*}
    d(\phi_t(x),\phi_{h(t)}(y))\leq\delta,\quad \forall t\in\RR
    \implies
    \exists t_0\in\RR \text{ such that }
    \phi_{h(t_0)}(y)\in \phi_{[t_0-\epsilon,t_0+\epsilon]}(x).
  \end{align*}
We say that a invariant compact set $\Lambda$ is expansive if the
restriction of $\phi_t$ to $\Lambda$ is an expansive flow.

Robust properties are extremely important in Dynamical Systems
theory. To precisely state the main result, we now define robust
expansiveness. Let $\cG:X\to\X^r(M)$ be a continuous family of vector
fields, where $r\ge2$ is fixed and $X$ is a metric space. We write
$G_s=\cG(s)$ the vector field given by $s\in X$ and denote by
$(\phi_t^{ G_s})_{t\in\RR}$ the corresponding flow in what follows.

We say that the family $\cG$ of vector fields is \emph{robustly
  expansive} on an attracting set
$\Lambda=\cap_{t>0}\close{\phi^{G_s}_t(U})$ for some $s\in X$ if there
exists a neighborhood $N$ of $s$ in $X$ such that for every
$\epsilon>0$ there is $\delta>0$ such that, for any
$x,y\in\Lambda_s=\cap_{t>0}\close{\phi^{G_s}_t(U)}$, $h\in S(\RR)$ and
$s\in V$
  \begin{align*}
    d(\phi^{G_s}_t(x),\phi^{G_s}_{h(t)}(y))\leq\delta,\quad \forall t\in\RR
    \implies
    \exists t_0\in\RR \text{ such that }
    \phi_{h(t_0)}(y)\in \phi^{G_s}_{[t_0-\epsilon,t_0+\epsilon]}(x).
  \end{align*}

 % Isto dá por si mesmo mais um artigo publicável nalguma
 %   revista como Stochastics and Dynamics ou Dyn Syst An
 %   Int. J. ou até NonLinearity ou J Diff Eq and Dyn Syst.

 %   + Mencionar Morales-San Martin, Contracting singular
 %   horseshoe. Nonlinearity 2007.

 %   + mencionar Pacifico-Todd, Thermodynamical Formalism Contracting
 %   Lorenz flows, onde obtiveram existência de estado de equilíbrio
 %   para \log J^{cu}.

 %   + Tem como aplicar a famílias de bifurcações? Sim:

 %     - Morales C A, Pacifico M J and San Martín B 2006
 %       Contracting Lorenz attractors through resonant double
 %       homoclinic loops SIAM J. Math. Anal. 38 309–32

 %     - Morales E M, San Martín B and Valenzuela J 2003
 %       Nonhyperbolic persistent attractors near the
 %       Morse–Smale boundary Ann. Inst. H. Poincaré Anal. Non
 %       Linéaire 20 867–88

 %   + tem também como aplicar no contexto de pequenas perturbações
 %     gerais como em Gianfelice-Vaient-etal para concluir estabilidade
 %     estatística na topologia fraca*.

\subsubsection{Statistical stability of equilibrium states}
\label{sec:statist-stabil-equil}

We can now precisely state our criteria for statistical stability of
families of attracting sets of vector fields.

\begin{maintheorem}
  \label{mthm:statstabeqstates}
  Let us assume that the family $\cG$ admits a trapping region $U$ so
  that the attracting set
  $\Lambda_s(U)=\cap_{t>0}\close{ \phi_t^{G_s}(U)}$ satisfies, for
  each parameter $s$ in some subset $N\subset X$:
  \begin{enumerate}
  \item there are finitely many ergodic physical measures
    $\mu^s_i, 1\le i\le k_i$ supported in $\Lambda_s$ so that\footnote{We
      write $A+B$ the union of the disjoint subsets $A$ and $B$.}
    $\m\big(U\setminus \sum_{i} B(\mu_i^s) \big)=0$;
  \item there exists a family of potentials $\psi_s:\Lambda_s\to\RR$
    so that $\mu$ is an equilibrium state w.r.t. $\psi_s$, i.e.
    $0=h_{\mu}(\phi_1^{G_s})+\int\psi_s\,d\mu$ if, and only if, $\mu$
    a physical measure;
  \item the function
    $\Psi:W(U):=\{(s,x)\in N\times U: x\in\Lambda_s(U)\}\to\RR$ given
    by $\Psi(s,x)=\psi_s(x)$ is continuous; and
\item the family $\cG$ is robustly expansive.
  \end{enumerate}
  Then, for each converging sequence $s_n\in N$ to $s\in N$ and every
  choice $\mu^{s_n}$ of a physical measure supported on $\Lambda_{s_n}(U)$,
  every weak$^*$ accumulation point $\mu$ of $(\mu^{s_n})_{n\ge1}$ is a
  convex linear combination of the ergodic physical measures of
  $\Lambda_s(U)$.
\end{maintheorem}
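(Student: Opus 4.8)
The plan is to pass to a weak$^*$ convergent subsequence of $(\mu^{s_n})$, say $\mu^{s_n}\to\mu$ after relabelling, and to prove directly that $\mu$ is an equilibrium state for the potential $\psi_s$; hypotheses (1) and (2) will then force $\mu$ to be a convex combination of the ergodic physical measures $\mu^s_i$. First I record that $\mu$ is a $\phi^{G_s}_t$-invariant probability measure supported in $\Lambda_s(U)$. It is a probability measure by compactness of $M$; it is $\phi^{G_s}_t$-invariant by letting $n\to\infty$ in $\int(\varphi\circ\phi^{G_{s_n}}_t)\,d\mu^{s_n}=\int\varphi\,d\mu^{s_n}$, using that $\phi^{G_{s_n}}_t\to\phi^{G_s}_t$ uniformly on compact subsets of $M\times\RR$ since $\cG$ is continuous. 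Finally $\supp\mu\subset\Lambda_s(U)$ because the maximal invariant set in the trapping region $U$ depends upper-semicontinuously on the vector field: given $\epsilon>0$, fix $T>0$ with $\close{\phi^{G_s}_T(U)}\subset B_\epsilon(\Lambda_s(U))$; then $\Lambda_{s_n}(U)\subset\close{\phi^{G_{s_n}}_T(U)}\subset B_\epsilon(\Lambda_s(U))$ for all large $n$ by continuity of the flow in the vector field, and since $\supp\mu^{s_n}\subset\Lambda_{s_n}(U)$ we let $n\to\infty$ and then $\epsilon\to0$. The same upper semicontinuity shows that $W(U)$ is closed in $N\times\close U$.

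The core of the argument is to pass to the limit in the Entropy Formula $h_{\mu^{s_n}}(\phi^{G_{s_n}}_1)+\int\psi_{s_n}\,d\mu^{s_n}=0$, valid by hypothesis (2) since each $\mu^{s_n}$ is a physical measure on $\Lambda_{s_n}(U)$. For the potential term I would extend the continuous function $\Psi$ of (3) to a continuous function $\widetilde\Psi$ on $N_0\times\close U$ for a suitable closed neighbourhood $N_0$ of $s$ in $X$ — a legitimate application of the Tietze extension theorem, since $W(U)$ is closed and $N_0\times\close U$ is normal — and then, using that $\mu^{s_n}$ is supported on $\Lambda_{s_n}(U)$, write $\int\psi_{s_n}\,d\mu^{s_n}=\int\widetilde\Psi(s_n,\cdot)\,d\mu^{s_n}$, which converges to $\int\widetilde\Psi(s,\cdot)\,d\mu=\int\psi_s\,d\mu$ by combining the weak$^*$ convergence $\mu^{s_n}\to\mu$ with the uniform convergence $\widetilde\Psi(s_n,\cdot)\to\widetilde\Psi(s,\cdot)$ on the compact set $\close U$ (a consequence of joint continuity of $\widetilde\Psi$ and compactness of $\{s_n:n\ge1\}\cup\{s\}$). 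For the entropy term I would invoke the upper semicontinuity provided by hypothesis (4): whenever $s_n\to s$ in $N$, $\nu_n$ is $\phi^{G_{s_n}}_1$-invariant and $\nu_n\to\nu$ weak$^*$, one has $\limsup_n h_{\nu_n}(\phi^{G_{s_n}}_1)\le h_\nu(\phi^{G_s}_1)$. This is where robust expansiveness is really used: a uniform expansivity constant $\delta$ (for a chosen $\epsilon$, valid for all nearby parameters) lets a single finite partition $\cP$ with $\diam\cP<\delta$ and $\mu(\partial\cP)=0$ compute the entropy of every $\phi^{G_{s_n}}_1$ with $n$ large, i.e. $h_{\nu_n}(\phi^{G_{s_n}}_1)=h_{\nu_n}(\phi^{G_{s_n}}_1,\cP)$, after which the classical upper-semicontinuity argument for a fixed partition — using $\mu(\partial\cP)=0$, invariance, and $\phi^{G_{s_n}}_1\to\phi^{G_s}_1$ — gives the claim. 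Granting this, the entropies $h_{\mu^{s_n}}(\phi^{G_{s_n}}_1)$ converge to $-\int\psi_s\,d\mu$, so $h_\mu(\phi^{G_s}_1)+\int\psi_s\,d\mu\ge0$.

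To finish, $P_{G_s}(\psi_s)=0$ by hypothesis (2), so the Variational Principle gives $h_\mu(\phi^{G_s}_1)+\int\psi_s\,d\mu\le0$; with the previous paragraph, $h_\mu(\phi^{G_s}_1)+\int\psi_s\,d\mu=0$, i.e. $\mu$ is an equilibrium state for $\psi_s$. Taking the ergodic decomposition $\mu=\int\nu\,d\widehat\mu(\nu)$ and using $h_\mu(\phi^{G_s}_1)=\int h_\nu(\phi^{G_s}_1)\,d\widehat\mu(\nu)$ (Jacobs' theorem) together with $\int\psi_s\,d\mu=\int\!\big(\int\psi_s\,d\nu\big)\,d\widehat\mu(\nu)$, and noting $h_\nu(\phi^{G_s}_1)+\int\psi_s\,d\nu\le0$ for every invariant $\Lambda_s$-supported $\nu$, we conclude $h_\nu(\phi^{G_s}_1)+\int\psi_s\,d\nu=0$ for $\widehat\mu$-almost every ergodic $\nu$. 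By hypothesis (2) each such $\nu$ is a physical measure, hence, being ergodic, one of the finitely many $\mu^s_1,\dots,\mu^s_k$ of hypothesis (1). Therefore $\widehat\mu$ is carried by $\{\mu^s_1,\dots,\mu^s_k\}$ and $\mu=\sum_i\widehat\mu(\{\mu^s_i\})\,\mu^s_i$ is the desired convex combination of the ergodic physical measures of $\Lambda_s(U)$.

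The one genuinely delicate point is the uniform upper semicontinuity of metric entropy used in the second paragraph; deriving it from robust expansiveness (as opposed to expansiveness of a single flow) is where hypothesis (4) is indispensable, and I expect the care to lie mostly in the flow-adapted partitions with which expansive flows realize their entropy and in making them uniform over the parameter. Everything else — invariance and support of the limit measure, continuity of the potential integral along the sequence, the Variational Principle, and the affine/ergodic-decomposition bookkeeping — is comparatively soft.
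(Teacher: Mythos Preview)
Your proposal is correct and follows the paper's overall strategy: show the weak$^*$ limit $\mu$ is an equilibrium state for $\psi_s$ by combining continuity of the potential integral (via a Tietze-type extension of $\Psi$) with upper semicontinuity of entropy along the sequence (via robust expansiveness and a single partition of small diameter that computes entropy uniformly in the parameter). The paper makes the entropy step explicit by passing through Bowen's notion of $h$-expansiveness: robust expansiveness forces the infinite dynamical ball $B^+(x,\delta)$ to lie in a short orbit arc, so $h_{loc}(\phi_1^{G_{s'}},\delta)=0$ for all nearby $s'$, and then Bowen's theorem gives $h_{\nu}(\phi_1^{G_{s'}})=h_{\nu}(\phi_1^{G_{s'}},\xi)$ for any partition $\xi$ with $\diam\xi<\delta$. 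This is exactly the ``flow-adapted'' justification you flag as the delicate point; your instinct that the time-$1$ map of an expansive flow is not itself expansive, but is $h$-expansive, is the right one.

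The only genuine difference is in the final step. Once $\mu$ is an equilibrium state, the paper invokes item~(2) again to say $\mu$ is itself a physical measure, and then decomposes $B(\mu)\cap U$ Lebesgue-a.e.\ as $\sum_i B(\mu)\cap B(\mu^s_i)$ using item~(1), obtaining the explicit weights $\alpha_i=\m(B(\mu)\cap B(\mu^s_i)\cap U)/\m(U)$ by averaging over basins. You instead run the ergodic decomposition of $\mu$ together with Jacobs' theorem and the affineness of $\nu\mapsto h_\nu+\int\psi_s\,d\nu$ to conclude that $\widehat\mu$-a.e.\ ergodic component is an equilibrium state, hence (by item~(2)) a physical measure, hence one of the finitely many $\mu^s_i$. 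Both arguments are valid; yours is the standard thermodynamic-formalism route and uses item~(1) only at the very end to identify the ergodic equilibrium states, while the paper's basin argument additionally delivers a concrete formula for the coefficients $\alpha_i$.
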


The conclusion of the previous theorem means, more precisely, that
\begin{enumerate}[(i)]
\item there are weights $\alpha_i\ge0$ such that $\sum_i\alpha_i=1$
  and $\mu=\sum_{i}\alpha_i\mu_i^s$; and
\item we have
  $ \left| \int\vfi\,d\mu_n -
    \sum_{i}\alpha_i\int\vfi\,d\mu_i^s\right|
  \xrightarrow[n\to\infty]{}0$ for every continuous observable
  $\vfi:U\to\RR$.
\end{enumerate}
In the applications which we present in what follows, the property
stated in item (2) above is provided by the (Pesin's) Entropy
Formula~\cite{Pe77,Man81,LY85}, that is, the potential is a geometric
potential $\psi_s=\log|\det|D\psi_1^{G_s}\mid E^{cu}|$ where $E^{cu}$
is a certain continuous subbundle of the tangent bundle at the points
of the attracting set.

% \begin{remark}
%   \label{rmk:infinitelymany}
%   It is possible to obtain stochastic stability with infinitely many
%   physical measures; see e.g. \cite{vdaraujo2001}. In the present
%   setting, the robustly expansive condition in item (4) prevents the
%   existence of transitive sets arbitrarily close to each other.
% \end{remark}

\subsection{Application to singular-hyperbolic attracting sets}
\label{sec:singul-hyperb-attrac}

Here we provide open classes of examples of application of the
previous abstract setting: the singular-hyperbolic attracing sets
(also known as ``Lorenz-like attractors''), encompassing, as
particular cases, the classical Lorenz attractor and the geometric
Lorenz attractor.

\subsubsection{Background on singular-hyperbolicity}
\label{sec:backgr-singul-hyperb}

Let $\Lambda$ be a compact invariant set for $G \in
\X^r(M)$.  We say that $\Lambda$ is {\em partially
  hyperbolic} if the tangent bundle over $\Lambda$ can be
written as a continuous $D\phi_t$-invariant sum
$
T_\Lambda M=E^s\oplus E^{cu},
$
where $d_s=\dim E^s_x\ge1$ and $d_{cu}=\dim E^{cu}_x=2$ for $x\in\Lambda$,
and there exist constants $C>0$, $\lambda\in(0,1)$ such that
for all $x \in \Lambda$, $t\ge0$, we have
\begin{itemize}
\item uniform contraction along $E^s$:
$\|D\phi_t | E^s_x\| \le C \lambda^t;$
\item domination of the splitting:
$\|D\phi_t | E^s_x\| \cdot \|D\phi_{-t} | E^{cu}_{\phi_tx}\| \le C \lambda^t.$
\end{itemize}
We refer to $E^s$ as the stable bundle and to $E^{cu}$ as the
center-unstable bundle.  A {\em partially hyperbolic attracting set}
is a partially hyperbolic set that is also an attracting set.

The center-unstable bundle $E^{cu}$ is \emph{volume expanding} if
there exists $K,\theta>0$ such that
$|\det(D\phi_t| E^{cu}_x)|\geq K e^{\theta t}$ for all $x\in \Lambda$,
$t\geq 0$.

We say that $\sigma\in M$ with $G(\sigma)=0$ is an {\em equilibrium}
or \emph{singularity}. In what follows and we denote by $\sing(G)$ the
family of all such points. We say that a singularity
$\sigma\in\sing(G)$ is \emph{hyperbolic} if all the eigenvalues of
$DG(\sigma)$ have non-zero real part.

A point $p\in M$ is \emph{periodic} for the flow $\phi_t$ generated by
$G$ if $G(p)\neq\vec0$ and there exists $\tau>0$ so that
$\phi_\tau(p)=p$; its orbit
$\cO_G(p)=\phi_{\RR}(p)=\phi_{[0,\tau]}(p)=\{\phi_tp: t\in[0,\tau]\}$
is a \emph{periodic orbit}, an invariant simple closed curve for the
flow.  An invariant set is \emph{nontrivial} if it is neither a
periodic orbit nor an equilibrium.

We say that a compact nontrivial invariant set $\Lambda$ is a
\emph{singular hyperbolic set} if all equilibria in $\Lambda$ are
hyperbolic, and $\Lambda$ is partially hyperbolic with volume
expanding center-unstable bundle.  A singular hyperbolic set which is
also an attracting set is called a {\em singular hyperbolic attracting
  set}. An \emph{attractor} is a transitive attracting set, that is,
an attracting set $\Lambda$ with a point $z\in\Lambda$ so that its
$\omega$-limit
\begin{align*}
  \omega(z)=\left\{y\in M: \exists t_n\nearrow\infty\text{  s.t.
  } \phi_{t_n}z\xrightarrow[n\to\infty]{}y \right\}
\end{align*}
coincides with $\Lambda$.
 
\subsubsection{Singular-hyperbolicity and statistical stability}
\label{sec:singul-hyperb-statis}

We may now state the following.

\begin{maincorollary}
  \label{mcor:exSingHyp}
  Every singular-hyperbolic attracting set for a $C^2$ flow admits a
  neighborhood $\V$ in $\X^2(M)$ where every system is
  statistically stable.
\end{maincorollary}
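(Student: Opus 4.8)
The plan is to deduce Corollary~\ref{mcor:exSingHyp} from Theorem~\ref{mthm:statstabeqstates} by verifying, for a singular-hyperbolic attracting set $\Lambda$ of a $C^2$ flow, the four hypotheses of the theorem on a suitable neighborhood $\V\subset\X^2(M)$, taking $X=\V$ itself and $\cG$ the inclusion. The key observation is that singular-hyperbolicity is a $C^1$-robust property: by the results of Morales--Pacifico--Pujals (and the now-standard structural theory of singular-hyperbolic attracting sets for three-dimensional flows, as presented in \cite{AraPac2010}), there is a trapping region $U$ and a $C^2$-neighborhood $\V$ of $G$ such that for every $Y\in\V$ the maximal invariant set $\Lambda_Y(U)$ is again singular-hyperbolic, with a continuous partially hyperbolic splitting $T_{\Lambda_Y}M=E^s_Y\oplus E^{cu}_Y$ depending continuously on $Y$, and with uniform hyperbolicity constants on $\V$.

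First I would record the existence of physical measures: each $\Lambda_Y(U)$ carries finitely many ergodic SRB/physical measures $\mu^Y_1,\dots,\mu^Y_{k_Y}$ whose basins cover Lebesgue-almost every point of $U$ (this is the SRB theory for singular-hyperbolic attracting sets, valid in the $C^2$ category), so hypothesis (1) holds for each $s=Y\in\V=:N$. Second, I would identify the potential: set $\psi_Y=-\log\bigl|\det\bigl(D\phi_1^{Y}\mid E^{cu}_Y\bigr)\bigr|$ on $\Lambda_Y(U)$. By the characterization of SRB measures for flows via the (Pesin) Entropy Formula \cite{LY85,Pe77,Man81} — a probability measure absolutely continuous along unstable manifolds is exactly one satisfying $h_\mu(\phi_1)=\int\log|\det D\phi_1|E^{cu}|\,d\mu$, i.e. $0=h_\mu(\phi_1^{Y})+\int\psi_Y\,d\mu$ — and using that volume expansion of $E^{cu}$ forces the only relevant positive Lyapunov exponents to live in $E^{cu}$ while $E^s$ is uniformly contracted, one gets that $\mu$ is an equilibrium state for $\psi_Y$ if and only if $\mu$ is a physical measure on $\Lambda_Y(U)$. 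This gives hypothesis (2). Third, continuity of $\Psi(Y,x)=\psi_Y(x)$ on $W(U)$ follows from the joint continuity of $(Y,x)\mapsto E^{cu}_Y(x)$ (continuity of the dominated splitting in the vector field and the base point, uniform on $\V$) together with the joint $C^1$-continuity of the time-one maps $(Y,x)\mapsto D\phi_1^{Y}(x)$; the determinant of the restriction to a continuously varying plane is then continuous, giving hypothesis (3). Finally, robust expansiveness — hypothesis (4) — is exactly the content of the robust expansiveness of singular-hyperbolic attracting sets for three-dimensional flows, which holds uniformly on a $C^1$-neighborhood by the arguments of Araujo--Pacifico--Pujals--Viana extended to attracting sets (see \cite{AraPac2010}); shrinking $\V$ if necessary we may assume it holds on all of $\V$.

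With (1)--(4) in place on $N=\V$, Theorem~\ref{mthm:statstabeqstates} applies: for any $Y_n\to Y$ in $\V$ and any choice of physical measures $\mu^{Y_n}$ on $\Lambda_{Y_n}(U)$, every weak$^*$ accumulation point is a convex combination of the ergodic physical measures of $\Lambda_Y(U)$, which is precisely statistical stability of each system in $\V$. The main obstacle I anticipate is the rigorous verification of hypothesis (2) in the non-uniformly hyperbolic setting with singularities — one must ensure that equilibrium states of the geometric potential $\psi_Y$ cannot "escape" to measures supported on the equilibria (where the center-unstable determinant behaves differently) and that the entropy-formula characterization of SRB measures applies verbatim despite the flow not being uniformly hyperbolic; this is where one leans most heavily on the existing structural results for singular-hyperbolic attracting sets rather than on soft arguments. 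The continuity statement (3) is routine once the dominated splitting's continuity is invoked, and (1) and (4) are citations.
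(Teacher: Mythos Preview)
Your proposal is correct and follows essentially the same route as the paper: take $N=\V$ with $\cG$ the identity, and verify hypotheses (1)--(4) of Theorem~\ref{mthm:statstabeqstates} by citing, respectively, the SRB theory for singular-hyperbolic attracting sets, the Entropy Formula characterization of SRB measures with the geometric potential $\psi_Y=-\log|\det D\phi_1^Y\mid E^{cu}|$, continuity of the dominated splitting, and robust expansiveness. The only minor discrepancy is your source for (4): the paper invokes the recent result of \cite{ArCerq} for robust expansiveness of singular-hyperbolic \emph{attracting sets}, whereas the APPV/Ara\'ujo--Pacifico arguments you cite were originally stated for \emph{attractors}; this is a citation detail rather than a gap in the strategy.
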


More precisely, given a flow $G$ of class $C^2$ on a compact manifold
exhibiting a singular-hyperbolic attracting set $\Lambda$, then we can
find a neighborhood $\V$ of $G$ in $\X^2(M)$ and a neighborhood $U$
of $\Lambda$ so that, letting $\cG:\V\to\X^2(M)$ be the restriction of
the identity to $\V$, then $\cG$ satisfies the conditions of
Theorem~\ref{mthm:statstabeqstates}. Indeed: for each $Y\in \V$ we
have that $\Lambda_Y(U)$ is a singular-hyperbolic attracting set and
\begin{enumerate}
\item
  $\Psi(Y,x)=\log|\det D\phi_1^Y\mid E^{cu}_x|,
  x\in\Lambda_Y(U)$ is continuous on $W(U)$ as in
  Theorem~\ref{mthm:statstabeqstates}(3) by robustness and continuity
  of dominated splittings in the $C^2$ neighborbood $\V$ -- see
  e.g. \cite[Appendix B]{BDV2004};
\item there are finitely many ergodic physical measures
  $\mu^Y_i, i=1,\dots,k(Y)$ supported in $\Lambda_Y(U)$ whose basins
  cover a full volume subset of $U$ -- see e.g. \cite{ArSzTr,ArMel18};
\item each physical measure supported in $\Lambda_Y(U)$ is an
  equilibrium state with respect to the potential
  $\psi_Y(x)=\psi(Y,x)$ -- see e.g. \cite{ArSzTr} again; and
  \item $\cG$ is robustly expansive: this was recently obtained
    in~\cite{ArCerq}.
\end{enumerate}
In the particular case of the classical Lorenz attractor~\cite{Lo63},
which was shown to be a robustly transitive singular-hyperbolic
attractor with the features of the geometrical Lorenz
attractor~\cite{Tu99}, we have a unique physical measure which has
strong statistical properties~\cite{APPV,AMV15,ArMel16} on a $C^2$ neighborhood
$\V$ as above. That is, we have (1-4) with $k(Y)\equiv1$. Hence we
reobtain a version of the main result of~\cite{bahsoun_ruziboev}:
\begin{corollary}
  \label{cor:Ruz}
  In a $C^2$ neighborhood $\V$ of a geometric Lorenz attractor with
  trapping region $U\subset\RR^3$, if $Y_n\to Y$ in the $C^2$ topology
  of $\X^2(\RR^3)$, then the
  unique physical measures supported on the attractors satisfy
  $\lim_{n\to\infty}\int\vfi\,d\mu_{Y_n} = \int\vfi\,d\mu_Y$ for all
  continuous observables $\vfi:U\to\RR$.
\end{corollary}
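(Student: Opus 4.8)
The plan is to obtain Corollary~\ref{cor:Ruz} as the special case of Corollary~\ref{mcor:exSingHyp} (hence of Theorem~\ref{mthm:statstabeqstates}) in which the number of ergodic physical measures is identically one. First I would fix a $C^2$ neighborhood $\V$ of the geometric Lorenz vector field $G$ and a trapping region $U\subset\RR^3$ as in the paragraph following Corollary~\ref{mcor:exSingHyp}, small enough that for every $Y\in\V$ the continuation $\Lambda_Y(U)=\bigcap_{t>0}\close{\phi_t^Y(U)}$ is a singular-hyperbolic attracting set. The one extra ingredient is robust transitivity: the geometric Lorenz attractor is robustly transitive (see \cite{Tu99,AraPac2010}), so after shrinking $\V$ we may assume that each $\Lambda_Y(U)$ is in fact a singular-hyperbolic \emph{attractor}; by uniqueness of the SRB/physical measure of a transitive singular-hyperbolic attractor~\cite{APPV}, each $\Lambda_Y(U)$ then carries exactly one physical measure $\mu_Y$, whose ergodic basin covers a full-volume subset of $U$. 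Thus condition~(1) of Theorem~\ref{mthm:statstabeqstates} holds with $k(Y)\equiv1$.

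Next I would observe that conditions~(2)--(4) of Theorem~\ref{mthm:statstabeqstates} hold for the family $\cG:\V\to\X^2(\RR^3)$ obtained by restricting the identity to $\V$, exactly as verified after the statement of Corollary~\ref{mcor:exSingHyp}: continuity of $\Psi(Y,x)=\log|\det D\phi_1^Y\mid E^{cu}_x|$ on $W(U)$ comes from robustness and continuity of the dominated splitting $E^s\oplus E^{cu}$ in the $C^2$ topology; each physical measure is the equilibrium state of the geometric potential $\psi_Y=-\log|\det D\phi_1^Y\mid E^{cu}|$ via Pesin's Entropy Formula (so that $0=h_{\mu_Y}(\phi_1^Y)+\int\psi_Y\,d\mu_Y$) by~\cite{ArSzTr}; and robust expansiveness is the content of~\cite{ArCerq}. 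No further verification is needed beyond what is already recorded there.

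Granting these hypotheses, the conclusion follows formally. Let $Y_n\to Y$ in $\V$. The measures $\mu_{Y_n}$ are all supported in the compact set $\close{U}$ (indeed in the fixed compact $\close{\phi_1(U)}$), so they form a relatively weak$^*$-compact family of Borel probability measures; hence $(\mu_{Y_n})_{n\ge1}$ has weak$^*$ accumulation points. By Theorem~\ref{mthm:statstabeqstates}, any such accumulation point is a convex combination of the ergodic physical measures of $\Lambda_Y(U)$, of which there is only one, namely $\mu_Y$; therefore every accumulation point equals $\mu_Y$, and so the whole sequence converges: $\mu_{Y_n}\to\mu_Y$ in the weak$^*$ topology, i.e. $\int\vfi\,d\mu_{Y_n}\to\int\vfi\,d\mu_Y$ for every continuous $\vfi:U\to\RR$. (This recovers the conclusion of \cite{bahsoun_ruziboev} without any reduction to a one-dimensional quotient map.)

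I expect the only delicate point to be the \emph{persistence of uniqueness} of the physical measure on the perturbed attractors, which is precisely why one invokes robust transitivity of the geometric Lorenz attractor together with~\cite{APPV}, rather than merely singular-hyperbolicity of the continuations; once this is in place, the statement is a direct consequence of Theorem~\ref{mthm:statstabeqstates} and weak$^*$ compactness.
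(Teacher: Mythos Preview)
Your proposal is correct and follows essentially the same route as the paper: the paragraph preceding Corollary~\ref{cor:Ruz} already records that robust transitivity of the geometric Lorenz attractor~\cite{Tu99} together with~\cite{APPV,AMV15,ArMel16} forces $k(Y)\equiv1$, so that conditions (1)--(4) of Theorem~\ref{mthm:statstabeqstates} hold with a unique physical measure and the conclusion follows. Your added remark that weak$^*$ compactness upgrades ``every accumulation point equals $\mu_Y$'' to convergence of the full sequence makes explicit a step the paper leaves implicit.
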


\subsection{Application to the Contracting Lorenz (Rovella) attractor}
\label{sec:rovella}

Here we provide a non-trivial example of application of the abstract
setting of the Main Theorem where the family of dynamics is not open:
perturbation of the Rovella or Contracting Lorenz attractors,
presented by Rovella in \cite{Ro93}.

\subsubsection{Background on the contracting Lorenz attractor}
\label{sec:backgr-contract-lore}

To present this dynamics and its main features, we start with the
geometric contracting Lorenz Flow, which is a modification of the
geometric Lorenz attractor from~\cite{Gu76,GW79,ABS77}, in which the
uniformly expanding direction at the singularity is replaced by a
strict nonuniformly expanding direction. In broad terms, following
\cite{PT09,GNP18}, we start with a linear vector field
$(\dot x, \dot y, \dot z)=(\lambda_1 x,\lambda_2 y, \lambda_3 z)$ in
the cube $[-1,1]^3$ whose real eigenvalues
$\lambda_1,\lambda_2,\lambda_3$ of the singularity at the origin
satisfy
\begin{align*}
  -\lambda_2 > - \lambda_3> \lambda_1 > 0,
  \quad
  r= - \frac{\lambda_2}{\lambda_1},
  \quad
  s=- \frac{\lambda_3}{\lambda_1},
  \qand r>s +3.
\end{align*}
We note that $\lambda_1+\lambda_3<0$ while in the geometric Lorenz
attractor the construction starts with $\lambda_1+\lambda_3>0$; see
e.g. \cite[Chapter 3, Section 3]{AraPac2010}.

Setting
$\Sigma^{-}=[-1/2,0]\times[-1/2,1/2]\times\{1\};
\Sigma^{+}=[0,1/2]\times[-1/2,1/2]\times\{1\}$; and
$\Sigma=\Sigma^{+}\cup \Sigma^{-}$ we have a cross-section for the
linear flow; see the left hand side of Figure~\ref{fig:L3D}. It is
straightforward to calculate the Poincar\'e map from $\Sigma^\pm$ to
the cross-section $x=\pm1$: for the $+$ case we obtain
$(x,y,1)\mapsto (1, yx^r,x^s)$.

\begin{figure}[h]
\begin{center}
  \includegraphics[width=6cm]{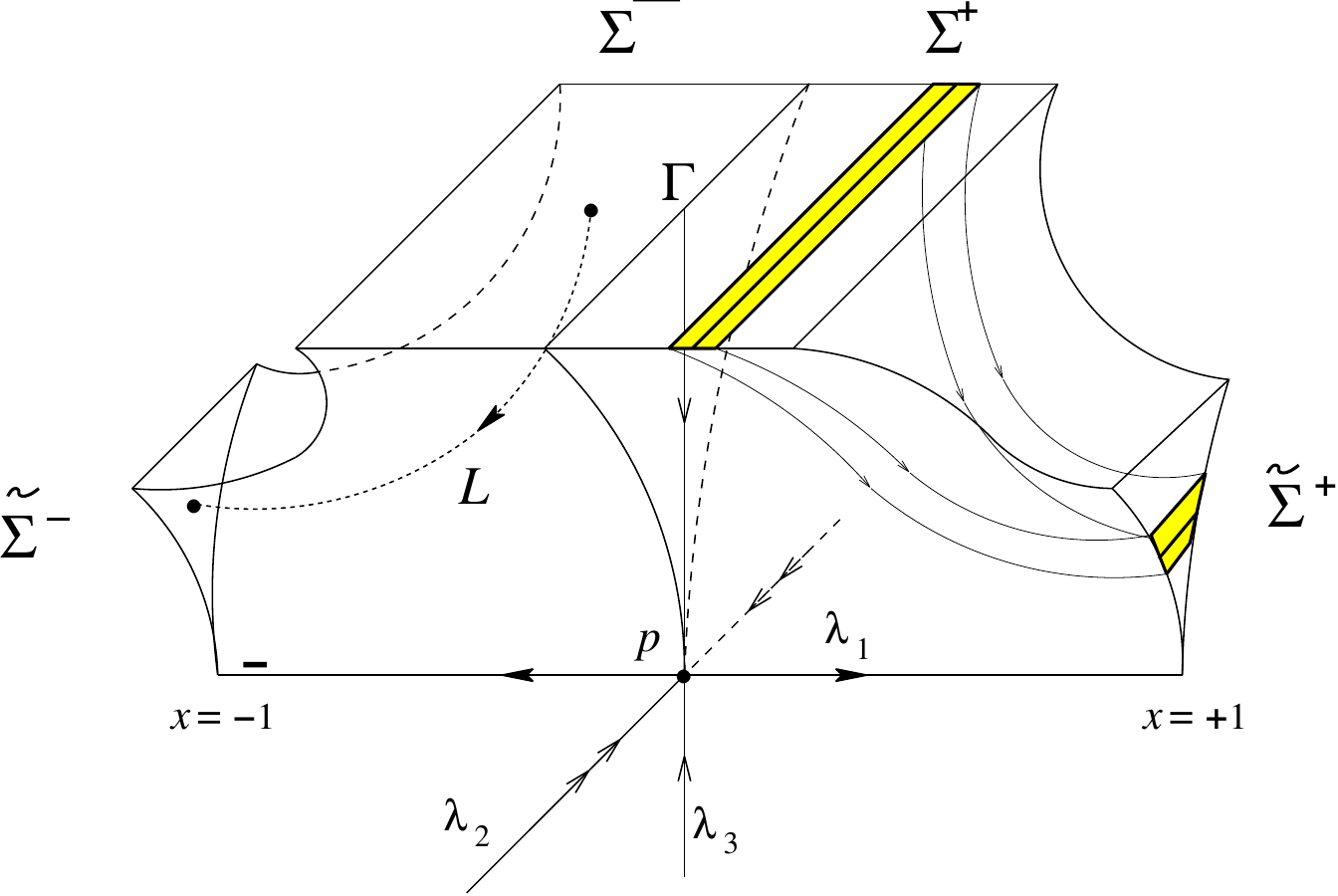}
  \;
  \includegraphics[width=4cm]{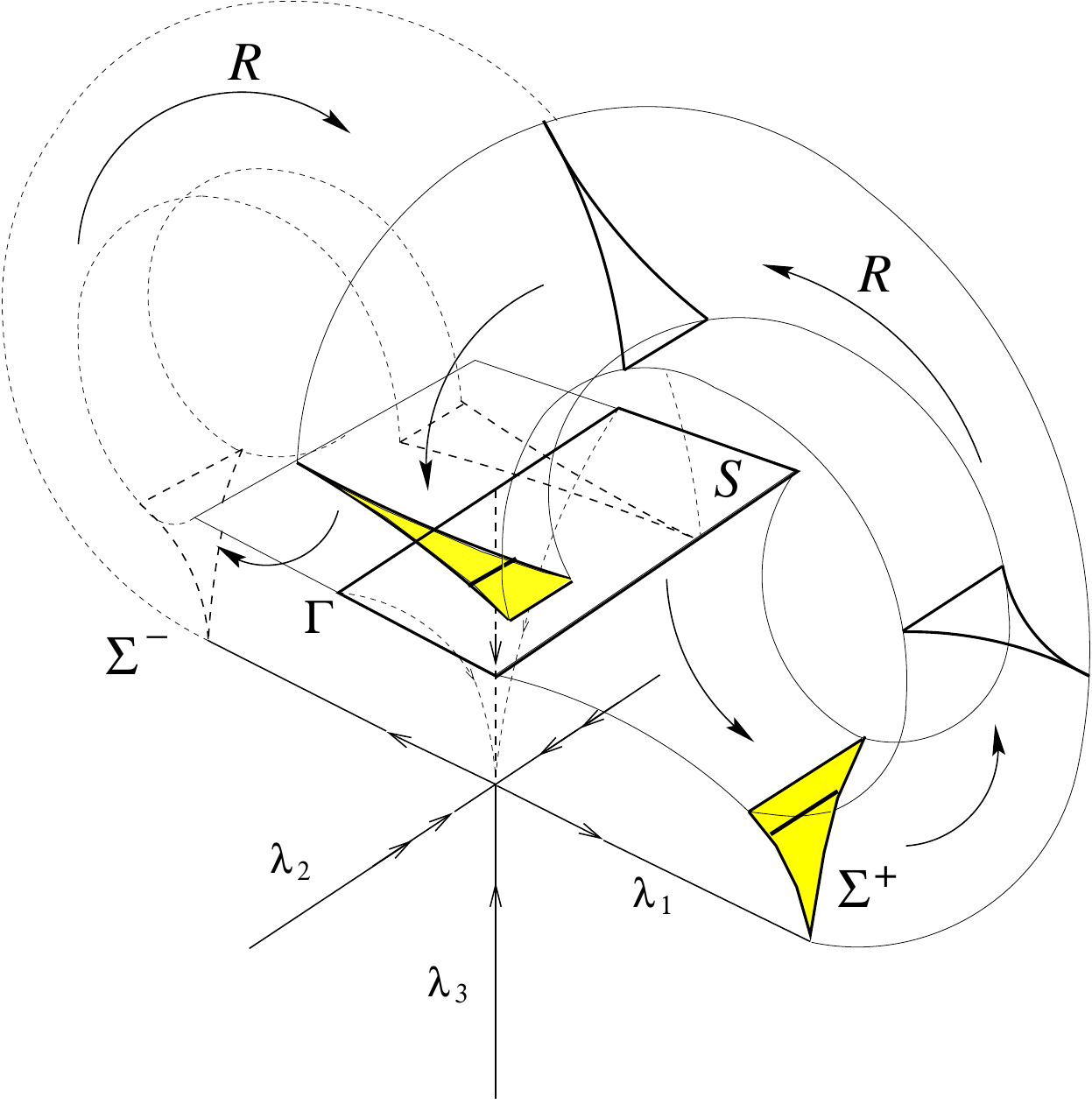}
  \;
  \includegraphics[width=3.5cm]{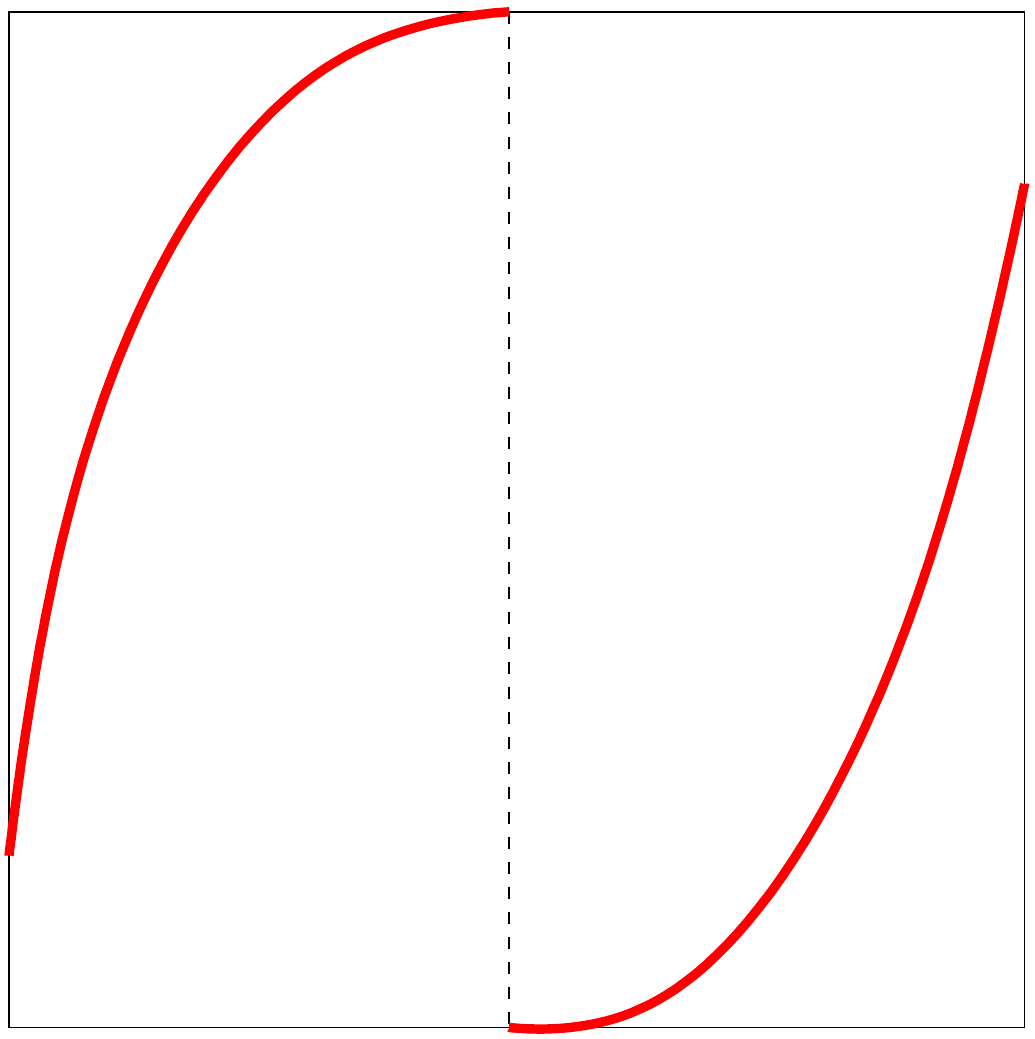}
\end{center}
\caption{\label{fig:L3D}Sketch of behaviour of the linear flow near the origin, on
  the left hand side; and the behaviour of the flow in a neighborhood of
  the attractor, on the center; and the one-dimensional map $T_0$ on
  the right hand side.}
\end{figure}

Outside the cube, we obtain a butterfly shape for the attractor after
rotating the orbits around the origin and returning to $\Sigma$, by a
suitable composition of a rotation, an expansion and a translation;
see the center of Figure~\ref{fig:L3D} and for more details,
see e.g. \cite[Chapter 3, Section 3]{AraPac2010}.
\begin{remark}\label{rmk:smooth-stable}
  As shown in \cite{Ro93} the condition $r>s+3$ ensures the existence
  of a $C^3$ uniformly contracting stable foliation for the
  Poincar\'{e} first return map of all small enough perturbations of
  the contracting geometric Lorenz flow.
\end{remark}
Using this foliation it is possible to obtain
an explicit expression for the Poincar\'e first return map
$R_0(x,y)=(T_0x,H_0(x,y))$ where
\begin{align*}
  T_0(x)=\sgn(x)\cdot(-\rho|x|^s+1/2)
  \qand
  H_0(x,y)=\sgn(x)\cdot(y|x|^r+c)
\end{align*}
for some $c>0$ depending on the choice of the rotations and
translations (assumming some symmetry to simplify the exposition), $r$
and $s$ are as defined above, and $0<\rho \leq (1/2)^{-s}$.

In \cite[Item 4, page 240]{Ro93} it is shown that $T_0$ satisfies (see
the right hand side of
Figure~\ref{fig:L3D})
\begin{enumerate}
\item $T_0$ is piecewise $C^{3}$ with two branches, restricted to each
  it is onto, and $T_0^{\prime }(x)=O(x^{s-1})$ at $x=0$\footnote{We
    write $f(x)=O(g(x))$ at $x=x_0$ if there exists $M,\delta$ such
    that $|f(x)|\leq M |g(x)|$ when $0<|x-x_0|<\delta$.} where
  $s-1>0$;
\item $T_0(0^{+})=1/2$ and $T_0(0^{-})=-1/2$;
\item $T_0'<0$ on $[-1/2,1/2]\setminus\{0\}$;
\item $\max T_0'\mid_{(0,1/2]}=T_0'(1/2)$ and
  $ \max T_0'\mid_{[-1/2,0)}=T_0'(-1/2)$.
  
Moreover, there are values of $\rho\leq (1/2)^{-s}$ so that
\item $\pm1/2$ are preperiodic repelling for $T_0$; and
\item $T_0$ has negative Schwarzian derivative.\footnote{This
    technical condition was strongly used to derive the stated
    results; see \cite[Remarks, p. 240]{Ro93}.}
\end{enumerate}

Rovella established that the flow of the vector field $G_0$ with these
features has an attractor $\Lambda_0$ and studied the dynamics of the
perturbations of this flow.  To state the results more relevant to us,
we present the notion of measure theoretical stability (persistence)
among parametrized families of systems.

We recall that a point $x$ is a \emph{density point} of a subset $S$
of a finite dimensional Riemannian manifold $M$, if
$$
\lim_{r\to 0}\frac{\m(B_r(x)\cap S)}{\m(B_r(x))}=1,
$$
where $B_r(x)$ the ball of radius $r$ centered at $x$.

\begin{definition}
  \label{def:kdensity}
  Given a subset $S$ of a Banach space $X$, we say that $x \in S$ is a
  point of $k$-dimensional full density of $S$ if there exists a
  $C^\infty$ submanifold $N\subset X$ with codimension $k$, containing
  $x$, such that every $k$-dimensional manifold $M$ intersecting $N$
  transversally at $x$ admits $x$ as a full density point of
  $S\cap M$ in $M$.
\end{definition}

We may now state what is mean by a \emph{persistent} attractor.

\begin{definition}
  \label{def:persistence}
  An attractor $\Lambda$ of a vector field $X\in \X^{\infty}$ is
  $k$-dimensionally almost persistent if it has a local basin $U$ such
  that $X$ is a $k$-dimensional full density point of the set of
  vector fields $Y\in \X^{\infty}$, for which
  $\Lambda_Y = \cap_{t > 0}Y^t(U)$ is an attractor.
\end{definition}

In \cite[item (b) at page 235]{Ro93} it is stated (and later proved in
the same work) that the attractor $\Lambda_0$ constructed as above is
$2$-dimensionally almost persistent in the $C^3$ topology.  Recently
this attractor was shown to be a prototype of a class of invariant
sets, similarly to the geometric Lorenz attractor, which is a
prototype of a singular-hyperbolic set.

\begin{definition}
  \label{def:ASH}
  A compact invariant partially hyperbolic set $\Lambda$ of a vector
  field $G$ (in the same setting as
  subsection~\ref{sec:backgr-singul-hyperb}, i.e. $d_{cu}=2$), whose
  singularities are hyperbolic, is \emph{asymptotically sectional
    hyperbolic} if the center-unstable subbundle is eventually
  asymptotically expanding outside the stable manifold of the
  singularities. That is, there exists $c_*>0$ so that
  \begin{align*}
    \limsup_{T\to\infty}\frac1T\log|\det(D\phi_T\mid E^{cu}_x)|\ge c_*,
    \quad\text{for each  } x\in\Lambda\setminus
  \bigcup_{\sigma\in\Lambda\cap\sing(G)}W^s(\sigma).
  \end{align*}
\end{definition}
Here $W^s(\sigma)=\{x\in M: \lim_{t\to+\infty}\phi_tx = \sigma\}$ is
the \emph{stable manifold} of the hyperbolic equilibrium $\sigma$. It
is well-known that $W^s(\sigma)$ is a immersed submanifold of $M$;
see e.g.\cite{PM82}.

The following was recently proved in~\cite{smvivas}.

\begin{theorem}
  \label{thm:rovella-ASH}
  The attractor $\Lambda_0$ is $2$-dimensionally almost persistent
  asymptotically sectional hyperbolic in the $C^3$ topology.
\end{theorem}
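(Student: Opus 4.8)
\emph{Structure of the argument.} The statement bundles two facts: that $\Lambda_0$ is $2$‑dimensionally almost persistent in the $C^3$ topology (Definitions~\ref{def:kdensity}--\ref{def:persistence}), and that the corresponding attractors are asymptotically sectional hyperbolic (Definition~\ref{def:ASH}). The first is exactly Rovella's theorem: $G_0$ is a $2$‑dimensional full‑density point of the set of $Y$ for which $\Lambda_Y(U)=\cap_{t>0}\close{\phi^Y_t(U)}$ is still an attractor and the associated one‑dimensional quotient map $T_Y$ — obtained by collapsing the $C^3$, uniformly contracting strong‑stable foliation of Remark~\ref{rmk:smooth-stable} (this is where $r>s+3$ is used) — is a contracting Lorenz map with negative Schwarzian derivative whose two critical values are preperiodic to repelling cycles bounded away from the critical point, as in items (1)--(6) above. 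So the plan is to show that \emph{every} such parameter satisfies Definition~\ref{def:ASH}; intersecting this parameter set with the full‑density set furnished by Rovella yields the stated conclusion, and specialising to $Y=G_0$ shows $\Lambda_0$ itself is asymptotically sectional hyperbolic.

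\emph{Partial hyperbolicity.} First I would check that $\Lambda_Y(U)$ is partially hyperbolic with $d_{cu}=2$ and hyperbolic singularity $\sigma$. Near $\sigma$ the splitting is the eigenspace decomposition $E^s=\langle e_2\rangle$, $E^{cu}=\langle e_1,e_3\rangle$ of $DY(\sigma)$, which is dominated because the ordering $\lambda_2<\lambda_3<0<\lambda_1$ makes $e_2$ the strongest contracting direction, so that $\|D\phi^Y_t\mid E^s\|\cdot\|D\phi^Y_{-t}\mid E^{cu}\|$ decays like $e^{(\lambda_2-\lambda_3)t}$; away from $\sigma$ one propagates the splitting along the flow using the invariance and uniform contraction of the strong‑stable foliation, and uses robustness and continuity of dominated splittings in the $C^3$ topology (cf.\ \cite[Appendix~B]{BDV2004}) to get a continuous $D\phi^Y_t$‑invariant bundle $E^{cu}$ over all of $\Lambda_Y(U)$, including at $\sigma$. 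Observe that, unlike in the singular‑hyperbolic case, $E^{cu}$ is \emph{not} volume expanding: near $\sigma$ one has $|\det(D\phi^Y_t\mid E^{cu})|\asymp e^{(\lambda_1+\lambda_3)t}$ with $\lambda_1+\lambda_3<0$, so only an asymptotic, non‑pointwise estimate can hold, and only away from $W^s(\sigma)$.

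\emph{The asymptotic expansion estimate.} The heart of the matter will be the bound
\[
\limsup_{T\to\infty}\frac1T\log|\det(D\phi^Y_T\mid E^{cu}_x)|\ge c_*>0,\qquad x\in\Lambda_Y(U)\setminus W^s(\sigma),
\]
with $c_*$ uniform in $x$ and in the parameter. Such an $x$ has forward orbit meeting the global cross‑section $\Sigma$ at successive times $t_0<t_1<\dots\to\infty$, at points $x_j$ with $\pi(x_j)=:z_j\ne0$, where $\pi$ collapses strong‑stable leaves, and
\[
\frac1{t_n}\log|\det(D\phi^Y_{t_n}\mid E^{cu}_x)|=\frac1{t_n}\sum_{j=0}^{n-1}\log|\det(DR_Y\mid E^{cu}_{x_j})|+O(1/t_n),
\]
with $R_Y$ the Poincar\'e return map and $t_n=t_0+\sum_{j<n}\tau_j$. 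Since the strong‑stable foliation is $C^3$ and uniformly contracting, $|\det(DR_Y\mid E^{cu}_{x_j})|$ equals $|T_Y'(z_j)|$ up to a uniformly bounded factor coming from the transverse flow direction, while $\tau_j$ is bounded below and $\tau_j\le\const\cdot\log(1/|z_j|)$ along an excursion near $\sigma$. It thus suffices to bound $\big(\sum_{j<n}\log|T_Y'(z_j)|\big)\big/\big(\sum_{j<n}\tau_j\big)$ from below. Here I would invoke the classical binding/return analysis for $C^3$ interval maps with negative Schwarzian derivative whose critical orbit is preperiodic to repelling cycles bounded away from the critical point (Misiurewicz--Collet--Eckmann maps): there are $C_Y,\gamma_Y>0$ with $|(T_Y^n)'(y)|\ge C_Ye^{\gamma_Y n}$ whenever $y,\dots,T_Y^{n-1}(y)$ stays outside a fixed neighbourhood $V_0$ of the critical point $0$; and each excursion reaching a point $z_j$ with $|z_j|=\eta$ small costs only $\approx(s-1)\log(1/\eta)$ in the numerator (as $|T_Y'(z_j)|=O(\eta^{s-1})$, $s>1$) but is immediately followed by a binding period of order $\log(1/\eta)$ further returns spent shadowing the repelling critical orbit, contributing $\approx s\log(1/\eta)$ to the numerator and only $\approx\const\cdot\log(1/\eta)$ to the denominator — so every orbit block is net expanding, with ratio bounded below by a positive constant depending only on $s$, $\gamma_Y$ and the distortion bounds, which Rovella's construction provides uniformly over the relevant parameters. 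Passing to the subsequence $t_n$ then gives the displayed $\limsup$ (in fact the $\liminf$) and, because we evaluate at return times, automatically sidesteps the area‑contracting behaviour of $D\phi^Y_t\mid E^{cu}$ during the long excursions themselves.

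\emph{Main obstacle.} The delicate step is precisely the last balance: since the center‑unstable area is genuinely contracted near $\sigma$ — the defining feature of the contracting Lorenz flow — there is no pointwise substitute for the sectional‑hyperbolic estimate, and one must simultaneously exploit the $C^3$ geometry of the uniformly contracting stable foliation and the fine recurrence of the one‑dimensional map (preperiodicity of $\pm1/2$ together with the negative Schwarzian derivative) to see that the long passages near the singularity are, in the asymptotic time average, net expanding rather than net contracting; making the constant $c_*$ uniform along the whole $2$‑dimensional full‑density family of Rovella parameters, rather than only at $G_0$, is what couples the estimate to the persistence statement and finishes the proof.
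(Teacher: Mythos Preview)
The paper does not supply its own proof of this theorem: immediately before the statement it writes ``The following was recently proved in~\cite{smvivas}'' and then uses the result as a black box in the subsequent arguments. So there is no proof in the paper to compare your attempt against.

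That said, your outline is a plausible independent route to the result: reduce asymptotic sectional expansion along $E^{cu}$ to a positive-Lyapunov-exponent estimate for the one-dimensional quotient $T_Y$ via the Poincar\'e return map, and exploit the Misiurewicz structure (preperiodic critical orbit landing on a repelling cycle, negative Schwarzian) that Rovella builds in. Two places in your sketch would need tightening before it becomes a proof. First, the claim ``$|\det(DR_Y\mid E^{cu}_{x_j})|$ equals $|T_Y'(z_j)|$ up to a uniformly bounded factor'' is not quite the right object: $E^{cu}$ contains the flow direction, so what you actually need is $\log|\det(D\phi^Y_{\tau_j}\mid E^{cu}_{x_j})|\approx(\lambda_1+\lambda_3)\tau_j=-(s-1)\log(1/|z_j|)$ during the linearised passage, which is consistent with your later bookkeeping but not with the formula as written. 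Second, the binding/free balance must be carried out with the \emph{flow time} $t_n=\sum_j\tau_j$ in the denominator rather than the return count $n$, since close approaches to $0$ make $\tau_j$ unbounded; you gesture at this, but making the ratio uniformly positive over the whole $2$-dimensional full-density family is precisely where the negative Schwarzian and the uniform separation of the repelling critical cycle from $0$ do real work, and that step is asserted rather than argued. These are gaps in detail rather than in strategy.
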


Let $R\subset\X^3(\RR^3)$ be the set of vector fields exhibiting a
Rovella attractor provided by Theorem~\ref{thm:rovella-ASH} and
$\cG:R\to\X^3(\RR^3)$ be the restriction of the identity to $R$.

\begin{maintheorem}
  \label{mthm:rovella-ss}
  The family $\cG$ of contracting Lorenz attractors, with trapping
  region $U$, is such that each of its elements admits a unique
  physical measure, whose basin covers $U$ except for zero $\m$-measure
  subset and is statistically stable.
\end{maintheorem}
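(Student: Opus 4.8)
The plan is to verify the four hypotheses of Theorem~\ref{mthm:statstabeqstates} for the family $\cG:R\to\X^3(\RR^3)$ of contracting Lorenz flows, taking $N=R$ itself (so that ``robustly expansive'' is understood among the members of $R$), the potential $\psi_s(x)=\log|\det(D\phi_1^{G_s}\mid E^{cu}_x)|$, and then invoking the uniqueness of the physical measure to upgrade the abstract conclusion to genuine weak$^*$ convergence. Since $R$ consists of asymptotically sectional hyperbolic attractors (Theorem~\ref{thm:rovella-ASH}), each $G_s$ is partially hyperbolic on $\Lambda_s(U)$ with a continuous dominated splitting $E^s\oplus E^{cu}$, and one first records that this splitting varies continuously with $s$ in the $C^3$ topology; this is the standard persistence of dominated splittings (as in \cite[Appendix B]{BDV2004}), and it immediately gives continuity of $\Psi(s,x)=\log|\det(D\phi_1^{G_s}\mid E^{cu}_x)|$ on $W(U)$, which is hypothesis~(3).

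For hypothesis~(1), I would appeal to the construction of SRB/physical measures for the Rovella attractor: the $C^3$ contracting stable foliation from Remark~\ref{rmk:smooth-stable} reduces the first return map $R_0$ to the one-dimensional contracting Lorenz map $T_0$, which has negative Schwarzian and preperiodic repelling critical values, hence admits a unique absolutely continuous invariant (in fact Rovella-expanding) probability; suspending it yields the unique physical measure $\mu_s$ of the flow, whose basin contains Lebesgue-a.e.\ point of $U$ (the set of points whose orbit falls into $\bigcup_\sigma W^s(\sigma)$ has zero volume). This is exactly the content of Rovella's original work \cite{Ro93} together with the later analyses \cite{mtz001,AlvesSoufi,GNP18}, so $k_i\equiv1$ and item~(1) holds with no extra effort. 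Hypothesis~(2) — the characterization of the physical measure as the unique equilibrium state of the geometric potential $\psi_s$ — is the Pesin Entropy Formula for flows: the ``only if'' direction (physical $\Rightarrow$ satisfies the entropy formula) follows from the Ruelle inequality being an equality for SRB measures with the $E^{cu}$ direction carrying all the positive Lyapunov exponents, and the ``if'' direction (satisfies entropy formula $\Rightarrow$ physical) follows from the Ledrappier--Young characterization \cite{LY85} that equality in the entropy formula forces absolute continuity of conditional measures along unstable manifolds, which pins down the unique SRB measure; here one must check that the nonuniform hyperbolicity along $E^{cu}$ (guaranteed by the asymptotic expansion in Definition~\ref{def:ASH}) is enough to run the Ledrappier--Young machinery, but this is the same mechanism already used for singular-hyperbolic attractors in \cite{ArSzTr} and the only novelty is replacing uniform volume expansion by the asymptotic expansion, which does not affect the argument.

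The main obstacle is hypothesis~(4), robust expansiveness of $\cG$ \emph{within the non-open family} $R$. For genuine singular-hyperbolic attractors this was obtained in \cite{ArCerq} using cross-sections and the uniform expansion of the induced one-dimensional dynamics; for the Rovella family the induced maps are only nonuniformly expanding, so one cannot quote that result directly. The plan is to adapt the cross-section argument: fix a global Poincaré section $\Sigma$ transverse to the flow and, for $s\in R$ near a reference $s_0$, work with the first return maps $R_s$ and their quotients $T_s$ over the (uniformly) contracting $C^3$ stable foliation of Remark~\ref{rmk:smooth-stable}, which persists over all of $R$. Two orbit segments that $\delta$-shadow each other for all time (in the flow, up to the time reparametrization $h\in S(\RR)$) project to two pseudo-orbits of $R_s$ that stay close; collapsing the stable foliation, these become orbits of $T_s$ that stay within a distance controlled by $\delta$ on the interval $[-1/2,1/2]$. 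The point is that $T_s$ is expansive away from the critical point — any two distinct points are eventually separated because $T_0'<0$ with no wandering intervals (negative Schwarzian, preperiodic critical values) — and this expansivity is uniform over the family $R$ because the defining constants ($r>s+3$, $\rho\le(1/2)^{-s}$, the preperiodicity and negative-Schwarzian conditions) are part of the definition of $R$ and vary continuously. Near the critical point / singularity one uses instead the partial hyperbolicity and the local structure of the flow around the hyperbolic equilibrium (as in the singular-hyperbolic case) to handle orbits passing close to $\sigma$. Piecing these together as in \cite{ArCerq}, with the nonuniform expansion of $T_s$ replacing the uniform expansion, should yield the uniform $\delta=\delta(\epsilon)$ valid for all $s$ in a neighborhood of $s_0$ in $R$; this bookkeeping — making every constant manifestly independent of $s\in R$ while tracking the flow near the equilibrium — is where the real work lies. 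Once (1)–(4) are in place, Theorem~\ref{mthm:statstabeqstates} gives that every weak$^*$ accumulation point of $(\mu_{s_n})$ is a convex combination of ergodic physical measures of $\Lambda_s(U)$; since there is exactly one, the accumulation point is $\mu_s$ itself, so $\mu_{s_n}\to\mu_s$ weak$^*$, which is precisely the statistical stability of the family $\cG$.
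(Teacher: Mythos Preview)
Your proposal follows the same strategy as the paper: verify hypotheses (1)--(4) of Theorem~\ref{mthm:statstabeqstates} with the geometric potential, then use uniqueness of the physical measure to upgrade the abstract conclusion to genuine weak$^*$ convergence. Two small corrections and one comparison are worth recording. First, the potential should carry a minus sign, $\psi_s=-\log|\det(D\phi_1^{G_s}\mid E^{cu})|$, to match the normalization $0=h_\mu+\int\psi_s\,d\mu$ in item~(2). Second, preperiodicity of the critical values holds only for the original $T_0$, not for generic Rovella parameters, which are characterized instead by Collet--Eckmann/slow-recurrence conditions; the existence and uniqueness of the acim across the family is supplied by \cite{AlvesSoufi12} (Theorem~\ref{thm:unique-acim} in the paper), not by preperiodicity. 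Third, for robust expansiveness the paper does not adapt \cite{ArCerq} but gives a short direct argument: Metzger's lemma \cite[Lemma~4.1]{mtz001} that each $T_a$ is locally eventually onto forces any nondegenerate subinterval of $[-1/2,1/2]\setminus\{0\}$ to eventually straddle $0$, so two $\delta$-shadowing flow orbits must project to the \emph{same} stable leaf after one return; a geometric lemma from \cite{APPV} comparing distances along $W^{ss}_\rho$ and short flow segments then finishes, with all constants manifestly uniform on a $C^3$ neighborhood of $G_0$. Your ``no wandering intervals'' intuition points in the right direction, but the locally-eventually-onto property is the precise one-dimensional input actually used.
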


\subsection{Organization of the text}
\label{sec:organization-text}

The strategy of the proof of Theorem~\ref{mthm:rovella-ss} is to show
that $\cG$ above satisfies all the conditions of
Theorem~\ref{mthm:statstabeqstates} with uniqueness of physical
measures for each attractor. This is presented in
Section~\ref{sec:contract-lorenz-fami}. In
Section~\ref{sec:proof-statist-stabil} we provide a proof of
Theorem~\ref{mthm:statstabeqstates}.

 \subsection*{Acknowledgments}
 \label{sec:acknowledgments}

 I thank the Mathematics Department at UFBA; CAPES-Brazil and
 CNPq-Brazil for the basic support of research activities; and also
 the anonymous referee for many suggestions that helped to improve the
 text.

%%%%%%%%%%%%%%%%%%%%%%%%%%%%%%%%%%%%%%%%%%%%%%%%%%%%%%%%%%

\section{The contracting Lorenz family of attractors}
\label{sec:contract-lorenz-fami}

Here we prove Theorem~\ref{mthm:rovella-ss} by showing that the family
perturbations of the attractor $\Lambda_0$ introduced by Rovella,
known also as contracting Lorenz attractors, satisfies the conditions
for statistical stability in the weak$^*$ topology stated in
Theorem~\ref{mthm:statstabeqstates}, with unique physical measures for
each element of the family $\cG$ in the statement of
Theorem~\ref{mthm:rovella-ss}.

\subsection{Existence and uniqueness of physical measure}
\label{sec:existence-uniquen-ph}

We start by observing that the partial hyperbolicity of the family of
contracting Lorenz flows given by \ref{thm:rovella-ASH} implies that
there exists an $D\phi_t$-invariant and uniformly contracting
extension of the subbundle $E^s$ to $U$ (which we denote by the same
symbol) together with $\epsilon_0>0$ such that, for all points
$x\in U$ and $0<\epsilon<\epsilon_0$, there exists a $C^3$ embedded disk
\begin{align*}
  W^{ss}_\epsilon(x)=\{y\in B(x,\epsilon):d(\phi_ty,\phi_tx)\xrightarrow[t\to+\infty]{}0\}
\end{align*}
which satisfies $T_xW^s_{\epsilon}(x)=E^s_x$ and is
$\phi_t$-invariant, that is
$\phi_tW^s_\epsilon(x)\subset W^s_\epsilon(\phi_tx)$ for all $t>0$;
see \cite{ArMel17}. In what follows, this disk is the \emph{local
  (strong-)stable manifold of size $\epsilon$ of $x$} and, when we do
not want to specify its size, we write $W^s_{loc}(x)$ understanding
that the size is to be taken uniform in $U$. It follows from the
theory of uniform hyperbolicity that $\epsilon_0>0$ above may be taken
uniformly on $U$ and on the vector field $G$ on a neighborhood $\V$ of
$G_0$; see \cite{PM82}.

Using the results from \cite{Ro93}, we have that, in a $C^3$
neighborhood $\V$ of the vector field $G_0$ described in
Subsection~\ref{sec:backgr-contract-lore}, the Poincar\'e first return
map $R_G$ to the cross-section $\Sigma$ for each $G\in\V$ can be
written as a skew-product $R_G(x,y)=(T_Gx,H_G(x,y))$ after a suitable
$C^3$ change of coordinates; this is a consequence of
Remark~\ref{rmk:smooth-stable}.

As proved in \cite{Ro93}, used in \cite{mtz001} and generalized
recently in \cite{AlvesSoufi12}, there exists a one-parameter family
$G_a, a\in[0,1]$ of vector fields $C^3$ close to $G_0$ admiting a subset
$E\subset(0,a_0)$ of parameters (``Rovella parameters'') so that $0$
is a density point of $E$. Moreover, the one-dimensional map $T_a$
corresponding the quotient $T_{G_a}$ of the Poincar\'e return map to
$\Sigma$ over the stable foliation, satisfies the following.

\begin{theorem}{\cite{AlvesSoufi12}}
  \label{thm:unique-acim}
  For each $a\in E$, the map $T_a$ of the interval $[-1/2,1/2]$ is a
  transitive non-uniformly expanding map with slow recurrence to the
  critical set; and has a unique absolutely continuous ergodic
  invariant probability measure $\nu_a$, whose basin $B(\nu_a)$ equals
  $[-1/2,1/2]$ except for a subset of zero Lebesgue measure.
\end{theorem}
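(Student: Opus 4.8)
The plan is to realize $T_a$ as a one-dimensional non-uniformly expanding map with critical set $\crit=\{0\}$ and then feed it into the general theory of existence and uniqueness of absolutely continuous invariant measures for such maps, in the style of Alves--Bonatti--Viana. The first point to keep in mind is that, since $s>1$, the derivative $T_a'$ \emph{vanishes} at $0$ (rather than blowing up, as in the geometric Lorenz case), so $0$ is a genuine critical point and the substance of the theorem is that, despite this degeneracy, Lebesgue-almost every orbit recovers exponential expansion. I would organize the argument into three parts: (i) the selection of the Rovella parameter set $E$ and the expansion/slow-recurrence estimates it provides along the critical orbit; (ii) the propagation of these estimates to $\m$-almost every point, giving non-uniform expansion and slow recurrence; and (iii) the passage from (ii) to existence, uniqueness and full basin.

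For (i), the set $E$ is carved out by a Benedicks--Carleson-type exclusion of parameters, exactly as in \cite{Ro93} and refined in \cite{AlvesSoufi12}. One retains those $a$ for which the orbits of the two critical values $\pm1/2=T_a(0^{\pm})$ satisfy an exponential growth (Collet--Eckmann) condition $|(T_a^n)'(\pm1/2)|\ge e^{\kappa n}$ together with a basic slow-recurrence condition $\dist(T_a^n(\pm1/2),0)\ge e^{-\alpha n}$ with $\alpha\ll\kappa$. The preperiodicity of $\pm1/2$ and the negative Schwarzian derivative of the unperturbed $T_0$ (properties (5)--(6) of $T_0$) are what make this selection feasible and force $0$ to be a density point of $E$. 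For (ii), the technical heart, I would use the binding/bound-period mechanism: when a generic orbit makes a close return to $0$, it shadows the critical orbit for a number of steps comparable to $-\log$ of its distance to $0$, and inherits, up to bounded distortion, the exponential growth from (i); the negative Schwarzian derivative supplies the Koebe-type distortion control needed to compare the two derivative products. Summing the gains over returns, and using the quantitative slow recurrence of the critical orbit to control the losses incurred near $0$, one produces constants $\lambda,\epsilon>0$ so that, for $\m$-a.e. $x\in[-1/2,1/2]$,
\begin{align*}
  \liminf_{n\to\infty}\frac1n\sum_{j=0}^{n-1}\log|T_a'(T_a^jx)|\ge\lambda
  \qquad\text{and}\qquad
  \limsup_{n\to\infty}\frac1n\sum_{j=0}^{n-1}-\log\dist_\delta(T_a^jx,0)\le\epsilon,
\end{align*}
where $\dist_\delta$ is the $\delta$-truncated distance to $0$. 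These are precisely the non-uniform expansion and slow-recurrence hypotheses required below.

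For (iii), the existence of finitely many ergodic absolutely continuous invariant probabilities whose basins cover $\m$-a.e. point of $[-1/2,1/2]$ follows from the Alves--Bonatti--Viana construction applied to $T_a$. Uniqueness and the full-basin statement then come from topological transitivity: because each of the two monotone branches of $T_a$ maps onto $[-1/2,1/2]$ (property (1)), with the critical values $\pm1/2$ preperiodic, the map is locally eventually onto, hence transitive (indeed topologically exact). A transitive non-uniformly expanding interval map cannot carry two distinct ergodic a.c.i.p.'s, since each carries an interval in its support of positive density and these would have to overlap; thus $\nu_a$ is unique and $B(\nu_a)=[-1/2,1/2]$ modulo an $\m$-null set, as claimed.

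The main obstacle is step (ii): transferring the critical-orbit estimates of (i) to $\m$-almost every point through the bound-period analysis. Relative to the classical Benedicks--Carleson setting the difficulty is sharpened by the fact that $T_a$ is \emph{contracting} near $0$, so every close return produces a genuine loss of expansion that must be more than recovered during the ensuing bound period. Keeping the accumulated distortion bounded while summing these competing contributions over infinitely many returns of varying depth is the delicate part, and is exactly where the negative Schwarzian hypothesis and the quantitative slow-recurrence bound defining $E$ are indispensable.
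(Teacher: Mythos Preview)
The paper does not prove this theorem: it is quoted from \cite{AlvesSoufi12} (building on \cite{Ro93,mtz001}) and used as a black-box input to the construction of the physical measure for the flow. There is therefore no ``paper's own proof'' to compare against; your sketch is in fact a faithful outline of the strategy carried out in those references---Benedicks--Carleson parameter exclusion to obtain the Collet--Eckmann and slow-recurrence estimates along the critical orbit, bound-period shadowing to propagate them to $\m$-a.e.\ point, and then the Alves--Bonatti--Viana machinery for existence of finitely many ergodic a.c.i.p.'s, with transitivity forcing uniqueness and full basin.

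One genuine imprecision in your step (iii): you justify transitivity by saying that each branch of $T_a$ maps onto $[-1/2,1/2]$ and that the critical values $\pm1/2$ are preperiodic. Those are properties of $T_0$, not of $T_a$ for $a\in E$. For a generic Rovella parameter the critical values $c_a^\pm=T_a(0^\pm)$ are \emph{not} preperiodic---they merely satisfy the Collet--Eckmann and slow-recurrence conditions you correctly listed in (i)---and the branches map onto $[c_a^-,c_a^+]\subsetneq[-1/2,1/2]$. The locally-eventually-onto property (hence transitivity) still holds, but its justification is the argument the present paper quotes as Lemma~\ref{le:leo} from \cite{mtz001}, not preperiodicity. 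Once you replace that appeal, your outline matches the proof in the cited sources.
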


As explained in \cite[Section 7]{mtz001} and also
e.g. in~\cite[Section 6]{APPV}, the existence of an ergodic physical
measure for the quotient map $T_a$ of a Poincar\'e return map $R_a$
over a uniformly contracting regular foliation, induces an ergodic
physical invariant probability measure for the flow through a standard
procedure.  In addition, if we start with a physical measure with full
ergodic basin for $T_0$, then the induced measure also has full
ergodic basin over the orbits of the flow starting on the
cross-section, which we may assume without loss of generality to
include $U$.

Hence, the flow of $G_a$ on the trapping region $U$ admits a physical
invariant probability measure $\mu_a$ supported on
$\Lambda_a=\Lambda_{G_a}(U)$ with full ergodic basin on $U$. Thus this
measure is the unique physical measure on $U$. We have obtained item
(1) of the statement of Theorem~\ref{mthm:statstabeqstates} with a
unique measure for each element of the family $\cG$.

%    Soufi fez apenas estabilidade estatística para a
% transformação de Rovella. Podemos aplicar as mesmas ideias
% de semicontinuidade da entropia se

%       - mostramos que o atrator de Rovella é expansivo;

%       - mostramos que a SRB do Rovella é estado de equilíbrio e
%       portanto é cu-Gibbs; 1) Alves-Soufi para unicidade de acim e
%       depois 2) Metzger para construcao phisical fluxo e finalmente
%       3) secao 8 APPV para u-Gibbs

%       - reciprocamente: cu-Gibbs vai ser SRB pela cont abs da
%         folheação estável e estado de equilíbrio por Led-Young (caso C2).

%       - mostrarmos que SRB equivale a satisfazer a fórmula da entropia o
% 	que deve combinar com os passos acima.

%       Portanto, temos hipóteses abstratas para um resultado de
%       estabilidade estatística na topologia fraca*, e temos exemplo
%       não trivial de aplicação. *PRONTO?*

\subsection{The physical measure is a $SRB$ measure}
\label{sec:physical-measure-cu}

Let $R_a$ be the Poincar\'e first return map to $\Sigma$.
% with
% Poincar\'e first return time $\tau_a:\Sigma\to[\tau_0,+\infty]$.
As presented in \cite[Section 8]{APPV} or \cite[Chapter 7, Sections
9-11]{AraPac2010}, if we assume that
\begin{itemize}
\item the Poincar\'e return map $R_a(x,y)=(T_ax,H_a(x,y))$ to the
  cross-section $\Sigma$ satisfies
  \begin{itemize}
  \item $H_a(x,\cdot)$ is a uniform contraction;
  \item $T_a$ is a one-dimensional non-uniformly expanding map with
    slow recurrence to the critical set (this is the discontinuous
    point $\{0\}$), as defined in \cite[Section 5]{ABV00} and provided
    by~\cite{AlvesSoufi12};
  \end{itemize}
\end{itemize}
then \emph{every absolutely continuous ergodic $T_0$-invariant
  probability measure $\nu_a$ induces a measure $\mu_a$ which is an
  ergodic hyperbolic $SRB$-measure}. That is, $\mu_a$ admits an
absolutely continuous disintegration along unstable manifolds.

\begin{remark}\label{rmk:hypmeas}
  Observe that since the flow direction on partially hyperbolic sets
  is contained in the central-unstable direction (see e.g.~\cite[Lemma
  5.1]{ArArbSal}), then Oseledets Theorem ensures that
  \begin{align*}
    \int\log|\det(D\phi_1^{G_a}\mid
    E^{cu})|\,d\mu_a=\int\lambda^+(x)\,d\mu_a(x)\ge c_*>0,
  \end{align*}
  where
  $\lambda^+(x)=\lim_{T\to\infty}\frac1T\log|\det(D\phi_T\mid
  E^{cu}_x)|$ is the largest Lyapunov exponent along the
  two-dimensional bundle $E^{cu}$ for $\mu_a$-a.e. $x$. This is
  strictly positive by asymptotical sectional-expansion, for
  otherwise a $\mu_a$ generic point would belong to the stable
  manifold of a singularity $\sigma\in\Lambda_a$, and thus 
  $\mu_a=\delta_\sigma$. But this would contradict the $SRB$ property
  obtained above.
\end{remark}

According the characterization of $SRB$ measures obtained by
Ledrappier and Young~\cite{LY85} we have
$ h_{\mu_a}(\phi_1^{G_a}) =\int\lambda^+(x)\,d\mu_a(x)$ and so after
Remark~\ref{rmk:hypmeas} we see that $\mu_a$ satisfies the Entropy
Formula
\begin{align}\label{eq:entropyform}
  h_{\mu_a}(\phi_1^{G_a})
  =\int\log|\det(D\phi_1^{G_a}\mid
  E^{cu})|\,d\mu_a>0.
\end{align}
Reciprocally, an invariant probability measure $\mu$ satisfying the
Entropy Formula~\eqref{eq:entropyform} for the $C^2$ partially
hyperbolic flow $G_a$ is a $SRB$ measure (by the result
from~\cite{LY85}) and since $E^{cu}$ is two-dimensional, then $\mu$ is
a hyperbolic measure: the Lyapunov exponents along $E^s$ are strictly
negative, there exists a positive Lyapunov exponent along the $E^{cu}$
direction together with the zero exponent along the flow
direction. Consequently, being a $SRB$ and hyperbolic measure, it is a
physical measure; see e.g. \cite{PS82,Young2002}.

Hence, using the the potential
$\psi_a=-\log|\det(D\phi_1^{G_a}\mid E^{cu})|$ we obtain item (2) of
the statement of Theorem~\ref{mthm:statstabeqstates}.

The continuity of dominated splittings \cite[Appendix B]{BDV2004} with
respect to the base point but also with respect to the dynamics,
together with the $C^3$ smoothness of the vector fields involved,
ensures that item (3) also holds in this setting.

\subsection{Robust expansiveness of contracting Lorenz flows}
\label{sec:robust-expansiveness}

Here we deduce robust expansiveness. We first use the following result
from \cite[Section 4]{mtz001}. We write
$c_a^\pm=T_a(0^\pm)=\lim_{t\to0^\pm}f(t)$; see the right hand side of
Figure~\ref{fig:L3D}. We note that $c_a^-<0<c_a^+$ and
$c_a^\pm\to\pm1/2$ when $a\to0$.

\begin{lemma}{\cite[Lemma 4.1]{mtz001}}
  \label{le:leo}
  There exists a $C^3$ neighborhood $\V$ of $G_0$ so that
  if $G_a\in\V$, then the map $T_a$ is locally eventually onto, that
  is, for any interval $J\subset [-1/2,1/2]\setminus\{0\}$ there
  exists $n=n(J)>0$ so that $f^n(J)\subset[c_a^-,c_a^+]$.

  Consequently, \emph{there does not exist a pair of points
  $x_0<y_0$ with the same sign in $[-1/2,1/2]\setminus\{0\}$ so that
  $T_a^n[x_0,y_0]$ does not contain the origin for all $n\ge1$.}
\end{lemma}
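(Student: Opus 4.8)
The plan is to establish the topological-exactness (``locally eventually onto'') assertion and then read off the displayed consequence: if there were $x_0<y_0$ of the same sign in $[-1/2,1/2]\setminus\{0\}$ with $0\notin T_a^n[x_0,y_0]$ for all $n\ge1$, then applying that property to the open interval $J=(x_0,y_0)\subset[-1/2,1/2]\setminus\{0\}$ would yield some $n\ge1$ with $T_a^n(J)\supseteq[c_a^-,c_a^+]$, an interval that contains $0$ because $c_a^-<0<c_a^+$ --- a contradiction. So everything reduces to showing that $T_a$ maps every open subinterval onto the forward-invariant core $I_a=[c_a^-,c_a^+]$ after finitely many iterates.

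First I would record the one-dimensional input, available for $G_a$ in a small $C^3$ neighborhood of $G_0$ (and on the relevant parameter set) from \cite{Ro93}, \cite{AlvesSoufi12} and Theorem~\ref{thm:unique-acim}: $T_a$ is piecewise $C^3$ with two onto monotone decreasing branches and a single critical point (and discontinuity) at $0$ of finite order $s>1$; it is topologically transitive on $I_a$; it has a unique absolutely continuous invariant probability $\nu_a$ whose basin is all of $[-1/2,1/2]$ up to a Lebesgue-null set; and, by standard one-dimensional theory ($C^3$ smoothness together with non-flatness of the critical point), it has no wandering intervals. Moreover $T_a$ has no attracting or neutral periodic orbit, since such an orbit would attract a set of positive Lebesgue measure disjoint from the basin of the absolutely continuous measure $\nu_a$, contradicting Theorem~\ref{thm:unique-acim}.

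The heart of the argument is a dichotomy. Fix a nondegenerate interval $J$ and suppose $0\notin T_a^n(J)$ for every $n\ge 0$ --- the only situation needed for the displayed consequence. Then every $T_a^n|_J$ is a $C^3$ diffeomorphism onto an interval $J_n=T_a^n(J)$, each $J_n$ lying on one side of $0$. Either the $J_n$ are eventually pairwise disjoint --- so a tail of $J$ is a wandering interval whose $\omega$-limit set is not a single periodic orbit, impossible by the absence of wandering intervals and of non-repelling periodic orbits; or some $J_m$ and $J_{m+p}$ overlap, and then the nested union $Q=\bigcup_{j\ge0}T_a^{jp}(J_m)$ has closure a nondegenerate compact interval avoiding $0$, with $T_a^p(\overline Q)\subseteq\overline Q$ and $T_a^p|_{\overline Q}$ a $C^3$ diffeomorphism; replacing $p$ by $2p$ to make it orientation preserving, an elementary argument (a monotone $C^1$ self-map of a nondegenerate compact interval must possess a fixed point with derivative at most $1$ in absolute value) produces a non-repelling periodic orbit of $T_a$ --- again impossible. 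Hence no such $J$ exists: every nondegenerate interval has an iterate that crosses $0$. For a closed interval $[x_0,y_0]$ of constant sign this already gives the displayed consequence. For the full ``onto'' statement it shows $T_a$ is non-renormalizable, so topological exactness on $I_a$ follows by the standard one-dimensional argument from transitivity (Theorem~\ref{thm:unique-acim}) and the fact that the two branches are onto $I_a$: once an iterate of $J$ contains $0$ in its interior, finitely many further iterates --- using surjectivity of the branches and, through transitivity, the absence of a proper forward-invariant subinterval --- make the image cover $I_a$.

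The only real obstacle is the ``contracting'' nature of the map at the turning point: since $T_a'(0)=0$ (in fact $T_a'(x)=O(|x|^{s-1})$ with $s>1$), there is no uniform expansion, so one cannot argue naively that intervals grow, and the argument is forced to route through the global dynamical facts above --- existence of an ergodic absolutely continuous measure with full basin, hence absence of attracting or neutral periodic orbits, plus absence of wandering intervals. Checking that these hold for \emph{all} small $C^3$ perturbations $T_a$, rather than just for $T_0$ (in particular that the critical point remains non-flat and the negative-Schwarzian-type control persists along the family), is the delicate point, and is precisely what Rovella's analysis in \cite{Ro93} and its refinement in \cite{AlvesSoufi12} provide.
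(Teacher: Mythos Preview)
The paper does not prove this lemma; it is quoted from Metzger \cite[Lemma~4.1]{mtz001} and used as a black box. Metzger's argument there is a direct interval-growth estimate: the defining property of Rovella parameters is an exponential lower bound $|(T_a^n)'(c_a^\pm)|\ge C\lambda^n$ on derivatives along the critical orbits; combined with the negative-Schwarzian distortion control, this forces every subinterval not yet containing $0$ to grow by a definite factor after a bounded number of steps, hence eventually to cross $0$, and then (again via the critical-orbit expansion) to cover $[c_a^-,c_a^+]$.

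Your route is genuinely different and more structural: instead of proving growth directly, you deduce the crossing property from the absence of wandering intervals and of non-repelling periodic orbits, both inferred from the ergodic a.c.i.p.\ with full Lebesgue basin (Theorem~\ref{thm:unique-acim}). This is a legitimate alternative and arguably more conceptual, but two points need care. First, the no-wandering-interval theorem you invoke as ``standard one-dimensional theory'' is established in the literature for smooth multimodal maps with non-flat critical points; for Lorenz-type maps with a genuine discontinuity at $0$ the conclusion is correct but not covered by the classical statement, so you should either cite a version that applies, or bypass it altogether (e.g.\ if the $J_n$ are pairwise disjoint then $|J_n|\to0$, while the non-uniform expansion underlying Theorem~\ref{thm:unique-acim} forces growth along a subsequence). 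Second, watch for circularity: Metzger's own construction of the a.c.i.p.\ already \emph{uses} the locally-eventually-onto property, so your argument must rest on the independent construction in \cite{AlvesSoufi12}, and you should confirm that the transitivity you quote from Theorem~\ref{thm:unique-acim} is obtained there without appealing to the present lemma. Finally, your Case~2 (``nested union $Q$'') is correct in outline but the word ``nested'' is not justified as written; the clean statement is that $K_i:=J_m\cup J_{m+p}\cup\cdots\cup J_{m+ip}$ is, by a short double induction, always a connected interval whose every forward iterate avoids $0$, so $Q=\bigcup_i K_i$ is an interval with $T_a^p(\overline Q)\subset\overline Q$.
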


We use this result to obtain robust expansiveness for the family $\cG$
restricted to the neighborhood $\V$.

Let $2\delta_0>0$ be the distance between the cross-sections
$\widetilde{\Sigma^+}$ and $\Sigma^+$, or between
$\widetilde{\Sigma^-}$ and $\Sigma^-$ (they are symmetrical); see the
left hand side of Figure~\ref{fig:L3D}. Let also $x,y\in U$ and
$h:\RR\to\RR$ be a surjective increasing continuous function such that
$d(x(t),y(t))\leq\delta$ for some $\delta\in(0,\delta_0)$ and for all
$t\in\RR$, where $x(t)=\phi_tx$ and $y(t)=\phi_{h(t)}y$ will be the
trajectories to consider in what follows (we removed $G_a$ from the
notation of the flow to lighten the text).

We consider also the pairs of consecutive hitting times
$x_n,y_n, n\ge1$ of these trajectories on $\Sigma$ and their
projections $\pi x_n,\pi y_n$ on the quotient $[-1/2,1/2]$ of $\Sigma$
over the stable leaves.

We note that if $\pi x_n\cdot \pi y_n<0$, i.e. returns to $\Sigma$ lie
on different sides with respect to the stable manifold of the
singularity at the origin, then the trajectories of $x_n$ and $y_n$
will eventually separate by a distance larger than $\delta$ during
their crossing of the linearized region near the singularity; see
again the the left hand side of Figure~\ref{fig:L3D}. This would
contradict the assumption on $x,y$ and $h$.

However, if we assume that $\pi x_1<\pi y_1$ and
$\pi x_1\cdot \pi y_1>0$, then, because $T_a$ has monotonous smooth
branches on $[-1/2,0)$ and $(0,1/2]$, we get
$[\pi x_{j+1},\pi y_{j+1}]=T_a[\pi x_{j},\pi y_{j}]$ as long as
$\pi x_j\cdot \pi y_j>0$ for $j=1,\dots,l$. Hence, from
Lemma~\ref{le:leo}, the trajectories will not be in this situation for
all $j\ge1$: there exists $l>1$ so that
$\pi x_{j+1}\cdot \pi y_{j+1}<0$. Hence $x,y$ cannot satisfy
$\pi x_1\cdot \pi y_1<0$ nor $\pi x_1\neq\pi y_1$.

We conclude that $\pi x_1=\pi y_1$ and both trajectories share a
stable leaf of the Poincar\'e return map $R_a$. This means that there
exists $t_1>0$ and $h(s_1)>0$ so that
$x_1=\phi_{t_1}x, y_1=\phi_{h(s_1)}y$ and
$d(x_1,\phi_{h(t_1)}y)<\delta$, and also that $y_1$ is in the same
contracting leaf of $R_0$ as $x_1$. Hence, since there exists $d>0$ so
that $\|G_a\|>d$ in a $\delta$-neighborhood of $\Sigma$ and the
curvature of the trajectories within this neighborhood is uniformly
bounded, \emph{for all $a$ such that $G_a\in\V$}, we have
\begin{enumerate}
\item there exists a constant $K>0$\footnote{This depends only on the
    \emph{neighborhood $\V$} through $d$.} so that
  $|h(s_1)-h(t_1)|<K\delta$; and
\item there exists $\epsilon_1>0$\footnote{This follows from the
    invariance of the stable manifolds of all points in $U$ together
    with the closeness of $y_1$ and $x_1$, together with the value of
    $d$ in the neighborbood $\V$.} and $t\in(-\epsilon_1,\epsilon_1)$ such that
  $\phi_ty_1\in W^{ss}_{loc}(x_1)$.
\end{enumerate}

Therefore
$\phi_{h(t_1)}y\in \phi_{ [h(s_1)-K\delta,h(s_1)+K\delta]
}(y)=A(y,\delta)$.

Let $\epsilon>0$ be given, set
$A(x,\epsilon)=\phi_{[t_1-\epsilon,t_1+\epsilon]}x$ and consider the
set of points of the trajectory of $x$ whose stable manifolds contain
points of $A(y,\delta)$
\begin{align*}
  A(x,y,\delta)=\{\phi_sx: W^{ss}_{loc}(\phi_sx)\cap A(y,\delta)\neq\emptyset\}.
\end{align*}
From item (2) above, we have that $A(x,y,\delta)$ is a neighborhood of
$\phi_{t_1}x$. This neighborhood can be made smaller by reducing
$\delta>0$ so that $A(x,y,\delta)\subset A(x,\epsilon)$. This means
that
\begin{align*}
\phi_{h(t_1)}(y)\in W^{ss}_{loc}(\phi_sx)\quad\text{for some}\quad
s\in[t_1-\epsilon,t_1+\epsilon].
\end{align*}
This is enough to conclude robust expansiveness. Indeed, following
\cite[Section 3.1]{APPV} we state first an auxiliary result.

\begin{lemma}{\cite[Lemma 3.2]{APPV}}\label{le:near}
There exist $c>0$ and $\rho>0$, depending only on the flow,
such that if $z_1, z_2, z_3$ are points in $U$ satisfying
$z_3\in \phi_{[-\rho,\rho]}(z_2)$ and $z_2\in W_\rho^{ss}(z_1)$, then
\[
d(z_1,z_3) \ge c \cdot \max\{d(z_1,z_2),d(z_2,z_3)\}.
\]
\end{lemma}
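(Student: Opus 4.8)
The plan is to prove Lemma~\ref{le:near} by an elementary transversality (cone) estimate between the strong-stable foliation and the flow direction. Since $\close U$ is compact, on each ball of radius $O(\rho)$ a local chart identifies the Riemannian distance $d$ with the ambient Euclidean one up to a factor as close to $1$ as we wish; as $z_2\in W^{ss}_\rho(z_1)\subset B(z_1,\rho)$ and $z_3\in\phi_{[-\rho,\rho]}(z_2)\subset B(z_2,C\rho)$ with $C=\sup_U|G|$, all three points lie in one such ball, so it suffices to prove the inequality for the chart distance and then adjust $c$. Two facts about the trapping region $U$ enter. First, the uniformly contracting subbundle $E^s$ extends $D\phi_t$-invariantly to $U$ with uniform-size local strong-stable disks $W^{ss}_\rho(x)$ of uniformly bounded $C^1$-norm, tangent to $E^s_x$ at $x$ (established above from \cite{ArMel17,PM82}). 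Second, the flow is uniformly transverse to this bundle on $U$: there is $\beta_0>0$ with $\angle(\RR\,G(x),E^s_x)\ge\beta_0$ for every $x\in U$ with $G(x)\ne\vec0$, which holds because $U$ carries a center-unstable cone field containing the flow direction and disjoint from the stable cone (ensured by choosing $U$ to avoid the non-trivial part of the strong-stable manifold of the singularity). Both bounds are uniform by continuity of $E^s$, of $G/|G|$ off the singularities, and compactness of $\close U$, and they persist on a $C^3$-neighbourhood $\V$ of $G_0$, which is what is actually needed for robust expansiveness.

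First I would fix a thinness parameter $\alpha\ll\beta_0$ and then $\rho>0$ small enough that two cone estimates hold uniformly over $z_1,z_2\in U$. (a) Since $W^{ss}_\rho(z_1)$ is a $C^1$ graph over $E^s_{z_1}$ through $z_1$ with derivative controlled by $\alpha$ once $\rho$ is small, its secant $v:=z_2-z_1$ lies in the cone $C^s_\alpha(z_1)$ of half-angle $\alpha$ about $E^s_{z_1}$, and $\tfrac12 d(z_1,z_2)\le|v|\le 2\,d(z_1,z_2)$. (b) The curve $s\mapsto\phi_s(z_2)$, $|s|\le\rho$, has velocity $G(\phi_s z_2)$, whose direction stays within half-angle $\alpha$ of $\RR\,G(z_2)$ once $\rho$ is small; hence $w:=z_3-z_2$ lies in the cone $C^{cu}_\alpha(z_2)$ of half-angle $\alpha$ about $\RR\,G(z_2)$, and $\tfrac12 d(z_2,z_3)\le|w|\le 2\,d(z_2,z_3)$. (If $G(z_2)=\vec0$ then $z_3=z_2$ and the conclusion is trivial for any $c\le1$; if $G(z_2)$ is merely small, only its direction matters, so nothing changes.)

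Finally, because $z_1$ and $z_2$ lie within $\rho$ and $E^s$ is uniformly continuous, $\angle(E^s_{z_1},\RR\,G(z_2))\ge\beta_0/2$ once $\rho$ is small, so any $v\in C^s_\alpha(z_1)$ and $w\in C^{cu}_\alpha(z_2)$ make an angle $\ge\beta_0/4$; an elementary computation then gives $|v+w|\ge c_0(|v|+|w|)$ with $c_0=\sqrt{(1-\cos(\beta_0/4))/2}>0$. Applying this with $v=z_2-z_1$ and $w=z_3-z_2$ yields $|z_3-z_1|=|v+w|\ge c_0(|v|+|w|)\ge \tfrac{c_0}{2}\big(d(z_1,z_2)+d(z_2,z_3)\big)\ge \tfrac{c_0}{2}\max\{d(z_1,z_2),d(z_2,z_3)\}$, and converting $|z_3-z_1|$ back to $d(z_1,z_3)$ costs only a bounded factor; the resulting constant is the desired $c>0$. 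The single genuinely delicate point is the uniform transversality of the flow to $E^s$ on all of $U$, and in particular its validity near the singularities, where $G$ vanishes: the degenerate case $G(z_2)=\vec0$ is trivial, while off the singularities the flow direction remains in the center-unstable cone, transverse to $E^s$, provided $U$ is chosen to avoid the non-trivial part of the strong-stable manifold of the singularity — which is part of the standard construction of the trapping region. All remaining estimates (exponential-map distortion, $C^1$-proximity of the tangent planes for small $\rho$) are routine from the uniform $C^3$ bounds on $\close U$.
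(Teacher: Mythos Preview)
The paper does not supply its own proof of this lemma; it is quoted from \cite[Lemma~3.2]{APPV} and used as a black box in the expansiveness argument. Your proof via the uniform angle between the flow direction and the stable bundle, followed by a law-of-cosines cone estimate in a local chart, is exactly the natural argument and is the one given in the original reference.

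There is one point where your resolution is not quite right. You correctly isolate the only delicate issue---the uniform transversality of $G/|G|$ to $E^s$ near the singularity---but your fix, namely choosing $U$ to avoid $W^{ss}(\sigma)\setminus\{\sigma\}$, is impossible: any open neighbourhood of $\sigma$ meets $W^{ss}(\sigma)$ in an open arc. Worse, along that arc $G$ is tangent to $W^{ss}(\sigma)$ and hence lies in $E^s$, so one can pick $z_1,z_2,z_3\in W^{ss}_{loc}(\sigma)\subset U$ with $z_3=\phi_s(z_2)=z_1$ and $d(z_1,z_2)>0$, violating the inequality for every $c>0$; the statement taken literally for all of $U$ is therefore false. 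The correct resolution is that the lemma is only needed, and only applied here, for points of the attractor $\Lambda$: one has $\Lambda\cap W^{ss}(\sigma)=\{\sigma\}$, and for every regular $x\in\Lambda$ domination forces $G(x)\in E^{cu}_x$ (this is the fact from \cite[Lemma~5.1]{ArArbSal} invoked in Remark~\ref{rmk:hypmeas}). Compactness of $\Lambda$ and continuity of the splitting then give the uniform lower bound $\beta_0>0$ on $\angle(\RR\,G(x),E^s_x)$ that your cone estimate requires, and the rest of your argument goes through unchanged. Since in the application to expansiveness $x,y\in\Lambda$, this is all that is needed.
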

We may assume without loss of generality that
$100\delta<cd\rho$. Arguing by contradiction, if
$\phi_{h(t_1)}(y)\neq\phi_{s}(x)$, then there exists a largest
$\theta>0$ satisfying
\begin{align*}
  \phi_{h(t_1)-t}(y)\in W^{ss}_\rho(\phi_{s-t}x)
  \qand
  \phi_{h(s-t)}(y)\in\phi_{[-\rho,\rho]}(\phi_{h(t_1)-t}(y))
\end{align*}
for all $0\le t\le\theta$. Hence for $t=\theta$ we must have
\begin{itemize}
\item either $d(\phi_{h(t_1)-t}(y),\phi_{s-t}(x))=\rho$;
\item or $d(\phi_{h(t_1)-t}(y), \phi_{h(s-t))}(y))\ge \frac12 d\rho$.
\end{itemize}
From Lemma~\ref{le:near} we deduce that
$d(\phi_{s-t}x,\phi_{h(s-t)}y)\ge cd\rho/2 >\delta$ contradicting the
assumption on $x,y$ and $h$.

We have prove expansiveness for any pair $\epsilon>0$ and
$\delta<\min\{cd\rho/100,\delta_0\}$, where all the constants involved
in the estimates are uniform in a neighborhood $\V$ of $G_0$, as
needed for robust expansiveness.

Altogether, the results in this section complete the proof of
Theorem~\ref{mthm:rovella-ss}.

\section{Proof of Statistical Stability}
\label{sec:proof-statist-stabil}

Here we prove the result on statistical stability for families of
flows in the conditions stated in the Main Theorem.  In the following
statements $X,M$ denote compact metric spaces.

% Given a map $f:Y\to Y$ and a Borel probability measure $\mu$
% we say that a $\mu\bmod 0$ partition $\xi$ of $Y$ is a
% \emph{generating partition} if
% \[
% \bigvee_{i=0}^{+\infty} (f^i)^{-1}\xi = \cA, \quad \mu\bmod0,
% \]
% where $\cA$ is the Borel $\sigma$-algebra of $Y$. We denote
% by $\partial\xi$ the set of topological boundaries of all
% elements of $\xi$.

\begin{theorem}[Continuity of equilibrium states]
\label{thm:conteqstate}
Let $f:X\times M\to M$ and $\psi:X\times M\to\RR$ be continuous maps,
which define a family of continuous maps
$f_t:M\to M, y\in Y\mapsto f_t(y)=f(t,y), t\in X$ and continuous
potentials $(\psi_t)_{t\in X}$ satisfying the following conditions.
\begin{enumerate}
\item $f_t$ admits some equilibrium state for $\psi_t$, i.e.
  there exists $\mu_t\in\cP_{f_t}(M)$ such that
  $P_{f_t}(\psi_t)=h_{\mu_t}(f_t)+\int \psi_t \, d\mu_t$ for
  all $t\in X$.
\item For each weak$^*$ accumulation point $\mu_0$ of $\mu_t$ when
  $t\to *\in X$, let $t_k\to *$ when $k\to\infty$ be such that
  $\mu_k=\mu_{t_k}\to\mu_0$. We write
  $f_k=f_{t_k}, \, \psi_k=\psi_{t_k}$ and assume also that
  \begin{enumerate}
  \item there exists a finite Borel partition
    $\xi$ of $M$ such that $h_{\mu_k}(f_k)=h_{\mu_k}(f_k,\xi)$ for all
    $k\ge1$; 
    % which is generating for $(Y,f_k,\mu_k)$, $k\ge1$,
    and $\mu_0(\partial \xi)=0$.
  \item $P_{f_k}(\psi_k)\to P_{f_*}(\psi_*)$ when $k\to\infty$.
  %   $\limsup_{k\to\infty} \int \psi_k\, d\mu_k \le \int \psi_0
  % \, d\mu_0$.
  \end{enumerate}
\end{enumerate}
Then every weak$^*$ accumulation point $\mu$ of $(\mu_k)_{k\ge1}$ when
$k\to\infty$ is a equilibrium state for $f_*$ and the potential
$\psi_*$.
%$\limsup_{k\to\infty} P_{f_k}(\psi_k)\le P_{f_0}(\psi_0)$.
\end{theorem}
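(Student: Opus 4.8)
The plan is to prove the inequality $P_{f_*}(\psi_*)\le h_\mu(f_*)+\int\psi_*\,d\mu$, since the reverse inequality is immediate from the Variational Principle. The strategy has two ingredients: first, upper semicontinuity of entropy along the sequence $\mu_k\to\mu$ coming from the common finite partition $\xi$ with $\mu(\partial\xi)=0$; second, continuity of the integral term $\int\psi_k\,d\mu_k\to\int\psi_*\,d\mu$, which follows from the joint continuity of $\psi$ on $X\times M$ together with $t_k\to *$ and $\mu_k\to\mu$ weak$^*$ (one writes $\psi_k$ uniformly close to $\psi_*$ on $M$ and uses weak$^*$ convergence for $\int\psi_*\,d\mu_k$).

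First I would recall the classical fact that for a fixed finite Borel partition $\xi$, the map $\nu\mapsto h_\nu(f,\xi)$ is upper semicontinuous at measures $\nu_0$ with $\nu_0(\partial\xi)=0$, and more delicately that when the dynamics $f_k$ also varies but converges to $f_*$ uniformly, one still has $\limsup_k h_{\mu_k}(f_k,\xi)\le h_\mu(f_*,\xi)$. The key point here is that $h_{\mu_k}(f_k,\xi)=\lim_n\frac1n H_{\mu_k}(\bigvee_{j=0}^{n-1}f_k^{-j}\xi)$, and for each fixed $n$ the function $(t,\nu)\mapsto H_\nu(\bigvee_{j=0}^{n-1}f_t^{-j}\xi)$ is upper semicontinuous at $(*,\mu)$ because $\mu(\partial(\bigvee_{j=0}^{n-1}f_*^{-j}\xi))=0$ (each $f_*^{-j}\partial\xi$ is $\mu$-null as $\mu$ is $f_*$-invariant) and the partition elements' boundaries move continuously with $t$. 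Taking the infimum over $n$ of these (sub-additively normalized) quantities and using that an infimum of u.s.c.\ functions is u.s.c., one gets $\limsup_k h_{\mu_k}(f_k,\xi)\le h_\mu(f_*,\xi)\le h_\mu(f_*)$.

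Then I would assemble the pieces: using hypothesis (1), $P_{f_k}(\psi_k)=h_{\mu_k}(f_k)+\int\psi_k\,d\mu_k=h_{\mu_k}(f_k,\xi)+\int\psi_k\,d\mu_k$ by hypothesis (2)(a); take $\limsup$ as $k\to\infty$; by hypothesis (2)(b) the left side converges to $P_{f_*}(\psi_*)$; by the entropy estimate above the first term on the right has $\limsup$ at most $h_\mu(f_*)$; and by the continuity of the integral term the second term converges to $\int\psi_*\,d\mu$. This yields $P_{f_*}(\psi_*)\le h_\mu(f_*)+\int\psi_*\,d\mu$, and with the trivial reverse inequality we conclude $\mu$ is an equilibrium state for $(f_*,\psi_*)$.

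The main obstacle I anticipate is the entropy step with \emph{varying dynamics}: one must be careful that $h_{\mu_k}(f_k,\xi)$ is controlled by quantities that are u.s.c.\ jointly in $(t,\nu)$ at $(*,\mu)$. The subtlety is that $\partial(\bigvee_{j=0}^{n-1}f_t^{-j}\xi)\subset\bigcup_{j=0}^{n-1}f_t^{-j}(\partial\xi)$ and one needs the preimages $f_t^{-j}(\partial\xi)$ to not suddenly acquire $\mu$-mass as $t\to *$; this uses that $f$ is continuous so $f_t^{-j}(\partial\xi)$ varies upper-semicontinuously as a set and that $\mu$ is $f_*$-invariant with $\mu(\partial\xi)=0$, hence $\mu(f_*^{-j}\partial\xi)=0$. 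A clean way to handle it is the standard lemma (cf.\ the proof of upper semicontinuity of entropy in expansive-type settings) that if $\mu_k\to\mu$, $f_k\to f_*$ uniformly, and $\mu(\partial\xi)=0$, then for every fixed $n$, $\limsup_k H_{\mu_k}(\xi^{(n)}_{f_k})\le H_\mu(\xi^{(n)}_{f_*})$ where $\xi^{(n)}_{g}=\bigvee_{j=0}^{n-1}g^{-j}\xi$; dividing by $n$, using subadditivity to pass to the infimum, and then swapping $\limsup_k$ with $\inf_n$ completes the argument.
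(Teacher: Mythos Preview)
Your proposal is correct and follows essentially the same route as the paper's proof: both bound $h_{\mu_k}(f_k,\xi)$ above by $\tfrac1N H_{\mu_k}(\xi_k^N)$ for fixed $N$, use $\mu(\partial\xi)=0$ and the joint continuity of $f$ to pass to the limit $H_{\mu_k}(\xi_k^N)\to H_\mu(\xi_*^N)$, then take the infimum over $N$ and combine with the convergence of the integral term and of the pressure to obtain $P_{f_*}(\psi_*)\le h_\mu(f_*,\xi)+\int\psi_*\,d\mu\le h_\mu(f_*)+\int\psi_*\,d\mu$. Your write-up is more explicit about the varying-dynamics issue (the $f_k^{-j}$ in $\xi_k^N$) and about invoking the Variational Principle for the reverse inequality, but the argument is the same.
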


Theorem~\ref{thm:conteqstate} is already known in several versions for
applications both to statistical and stochastic stability; see
e.g.~\cite[Theorems 10-12]{araujo2006} and also~\cite{CoYo2004} and
\cite{ArTah,ArTah2}. For completeness we provide its short proof.

\begin{proof}
  For each fixed $N,k>1$ we have by assumption
  \begin{align*}
    P_{f_k}(\psi_k)
    =
    h_{\mu_k}(f_k,\xi)+\int \psi_k \, d\mu_k
    \le
    \frac1N H_{\mu_k}(\xi_k^N)
    +
    \int \psi_k\, d\mu_k
  \end{align*}
  where $\xi_k^N=\bigvee_{i=0}^{N-1}(f_k^i)^{-1}\xi$.  Letting
  $k\to\infty$ we obtain by assumption (and compactness)
\begin{align*}
  P_{f_*}(\psi_*)
  \le
  \frac1N\limsup_{k\to\infty}H_{\mu_k}(\xi_k^N)
  + \int\psi_*\,d\mu.
\end{align*}
Finally since $\mu(\partial\xi_*^N)=0$ and
$\mu_k(\xi_k^N(x))\to\mu(\xi_*^N(x))$ for $\mu$-a.e. $x$, we obtain
\begin{align*}
  \limsup_{k\to\infty}H_{\mu_k}(\xi_k^N)=H_{\mu}(\xi_*^N)
\end{align*}
and because $N>1$ is arbitrary, we conclude
\begin{align*}
  P_{f_*}(\psi_*)
  \le
  h_{\mu}(f_*,\xi)  + \int\psi_*\,d\mu
  =
  h_{\mu}(f_*)+\int\psi_*\,d\mu,
\end{align*}
which shows that $\mu$ is an equilibrium state for $\psi_*$.
\end{proof}

Now we need to check that the assumptions of
Theorem~\ref{mthm:statstabeqstates} imply the conditions of
Theorem~\ref{thm:conteqstate}.

\subsection{Entropy expansiveness}
\label{sec:local-entropy-entrop}

A way to quantify how the flow of $G$ moves trajectories away from one
another is to use \emph{dynamical balls}. For each $x\in M$ and
$\epsilon>0$ we set for each given $t>0$
\begin{align*}
  B_t(x,\epsilon)
  =
  \bigcap_{|u|<t}\phi_{-u}B(\phi_ux,\epsilon)
  =
  \{y\in M: d(\phi_ux,\phi_uy)<\epsilon, -t<u<t\}.
\end{align*}
We denote $f=\phi_1$ the time-$1$ map of the flow of $G$.
Given $E,F\subset M$ we say that $F (n,\delta)$-spans $E$ if
\begin{align*}
  E\subset\cup_{y\in F}B_n(y,\delta)
\end{align*}
and we set $r_n(E,\delta)$ as the largest number of elements of a
$(n,\epsilon)$-spanning set of $E$. We can now define the entropy of
$f$ over a compact subset $K$ as
\begin{align*}
  h(f,K)=\lim_{\delta\to0}\limsup_{n\to\infty}\frac1n\log r_n(K,\delta).
\end{align*}
Following Bowen \cite{Bowen72} we set
$h_{loc}(f,\delta)=\sup_{x\in M}h(f,B^+(x,\delta))$ where
$B^+(x,\delta)=\cap_{n\ge1}B_n(x,\delta)$. We say that the flow of $G$
is \emph{entropy expansive} if $h_{loc}(f,\delta)=0$ for some
$\delta>0$ and this value of $\delta$ is an $h$-expansiveness
constant.

\begin{theorem}
  \label{thm:Bowen}
  Let $M$ be a compact metric space of finite dimension and $\xi$ a
  Borel partition of $M$ with $\diam(\xi)<\epsilon$. Then, for each
  $f$-invariant probability measure $\mu$ we have
  $h_{\mu}(f)\le h_\mu(f,\xi)+h_{loc}(f,\epsilon)$. In particular,
  $h_\mu(f)=h_\mu(f,\xi)$ if $\epsilon$ is an $h$-expansiveness
  constant for $f$.
\end{theorem}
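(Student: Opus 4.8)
The plan is to follow Bowen's original argument from \cite{Bowen72} adapted to the language of finite Borel partitions, exploiting that $\xi$ has diameter less than $\epsilon$. First I would fix an $f$-invariant probability measure $\mu$ and an integer $N\ge1$, and consider the refined partition $\xi_0^{N-1}=\bigvee_{i=0}^{N-1}f^{-i}\xi$. The key observation is that each atom $P$ of $\xi_0^{N-1}$ is contained in a dynamical ball: if $x\in P$ then for every $0\le i\le N-1$ the point $f^ix$ lies in the same atom of $\xi$ as $x$, hence within $\epsilon$ of it, so $P\subset B_{N}(x,\epsilon)$ (using one-sided dynamical balls, or adjusting constants for the two-sided version). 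Therefore the atoms of $\xi_0^{N-1}$ that meet a given $B^+(x,\delta)$ are few in an entropy sense: one shows $h_\mu(f,\xi)\ge h_\mu(f)-h_{loc}(f,\epsilon)$ by the standard comparison between the partition entropy $h_\mu(f,\xi)$ and the topological entropy of $f$ restricted to the atoms $B^+(x,\epsilon)$.

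Concretely, the main step is the inequality
\begin{align*}
  h_\mu(f)\le h_\mu(f,\xi)+\sup_{x}h(f,B^+(x,\epsilon)),
\end{align*}
which I would obtain as follows. Pick a second partition $\eta$ finer than $\xi$ with arbitrarily small diameter; then $h_\mu(f)=\lim_{\diam\eta\to0}h_\mu(f,\eta)$, and it suffices to bound $h_\mu(f,\eta)-h_\mu(f,\xi)\le h_\mu(f,\eta\mid\xi)\le H_\mu(\eta\mid\xi_0^{\infty})$ where $\xi_0^\infty=\bigvee_{i\ge0}f^{-i}\xi$. The conditional entropy $H_\mu(\eta\mid\xi_0^\infty)$ is controlled by the maximal number of $\eta$-atoms inside a single atom of $\xi_0^\infty$; since the atoms of $\xi_0^\infty$ are contained (up to the two-sided adjustment and a limiting argument) in sets of the form $B^+(x,\epsilon)$, this number grows at most like $e^{n\,h(f,B^+(x,\epsilon))}$ along the refinements $\bigvee_{i=0}^{n-1}f^{-i}\eta$, giving the bound $h_{loc}(f,\epsilon)=\sup_x h(f,B^+(x,\epsilon))$ in the limit. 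Taking the supremum over $x$ and letting $\diam\eta\to0$ yields the claimed inequality; the reverse inequality $h_\mu(f,\xi)\le h_\mu(f)$ is trivial. The ``in particular'' clause is then immediate: if $\epsilon$ is an $h$-expansiveness constant, $h_{loc}(f,\epsilon)=0$, so $h_\mu(f)\le h_\mu(f,\xi)\le h_\mu(f)$.

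The main obstacle is the bookkeeping that converts ``atoms of $\xi_0^{N-1}$ sit inside one-sided dynamical balls'' into a clean entropy estimate, i.e. correctly passing between the partition $\bigvee_{i=0}^{n-1}f^{-i}\eta$ and the spanning-set definition of $h(f,B^+(x,\epsilon))$, and handling the difference between one-sided balls $B^+$ (which is what the partition $\xi_0^{N-1}$ naturally produces) and the two-sided dynamical balls $B_t$ used in the definition of $h_{loc}$. Since $f=\phi_1$ is a homeomorphism, a one-sided $(n,\epsilon)$-estimate can be promoted to a two-sided one at the cost of a fixed factor that vanishes in the entropy limit, so this is routine but must be stated carefully; I would cite \cite{Bowen72} for the precise form of the comparison lemma rather than reproduce it. The finite-dimensionality hypothesis on $M$ enters only to guarantee that the relevant spanning numbers $r_n(B^+(x,\epsilon),\delta)$ are finite and that the limits defining $h(f,\cdot)$ exist, so I would invoke it at that point and nowhere else.
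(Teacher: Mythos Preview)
The paper does not give a proof of this theorem at all: its entire ``proof'' is the single line ``See \cite[Theorem 3.5]{Bowen72}.'' Your proposal goes further by sketching Bowen's original argument, and the sketch is broadly faithful to that argument (atoms of $\xi_0^{N-1}$ sit inside one-sided $\epsilon$-dynamical balls, so the extra entropy needed to refine $\xi$ down to an arbitrary $\eta$ is controlled by $\sup_x h(f,B^+(x,\epsilon))$). Since you yourself say you would ``cite \cite{Bowen72} for the precise form of the comparison lemma rather than reproduce it,'' your proposal and the paper's treatment are effectively the same: both defer the actual work to Bowen.
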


\begin{proof}
  See \cite[Theorem 3.5]{Bowen72}.
\end{proof}

\subsection{Statistical stability}
\label{sec:statist-stabil}

We are now ready for the proof of the Main Theorem.

\begin{proof}[Proof of Theorem~\ref{mthm:statstabeqstates}]
  Let $\cG:N\to\X^s(M)$ be a family of vector fields admitting a
  trapping region $U$ whose attracting set satisfies the conditions on
  the statement of Theorem~\ref{mthm:statstabeqstates}.

  The continuity assumption of item (3) ensures that we may
  continuously extend $\Psi:W(U)\to\RR$ to $\psi:N\times M\to\RR$
  which clearly satisfies items (1) and (2b) of the statement of
  Theorem~\ref{thm:conteqstate} with $X=N$.

  The robustly expansiveness assumption has the following
  straighforward consequence. For a robustly expansive attracting set
  $\Lambda_G(U)$ on the family $\cG:N\to\X^r(M)$ we can find a pair
  $\epsilon,\delta>0$ so that for each $s\in N$ and
  $x\in \Lambda_s(U)$, there exists $t_0\in\RR$ satisfying
  $B^+(x,\delta)=\cap_{T>1}B_T(x,\delta)\subset
  \phi^{G_s}_{[t_0-\epsilon,t_0+\epsilon]}(x)$.

  In particular, this ensures that $\delta$ is an expansiveness
  constant for each vector field $G_s$ on the invariant compact set
  $\Lambda_s(U)$, $s\in N$; see e.g. \cite[Example 1.6]{Bowen72}.

\begin{proposition}\label{pr:robust-h-exp}
  A robustly expansive attracting set $\Lambda_G(U)$ on a family
  $\cG:N\to\X^r(M)$ admits $\delta>0$ which is a constant of
  $h$-expansiveness for each flow in the family.
\end{proposition}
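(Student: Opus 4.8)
The plan is to combine the structural consequence of robust expansiveness already recorded above---for each $s\in N$ and each $x\in\Lambda_s(U)$ there is $t_0\in\RR$ with $B^+(x,\delta)=\bigcap_{n\ge1}B_n(x,\delta)\subset\phi^{G_s}_{[t_0-\epsilon,t_0+\epsilon]}(x)$---with the elementary fact that a compact arc of a single flow orbit carries no entropy for the time-$1$ map. Accordingly, I would first reduce the proposition to the claim
\begin{align}
  \label{eq:arczero}
  h\big(\phi_1^{G_s},\,\phi^{G_s}_{[a,b]}(x)\big)=0\qquad\text{for all }x\in M,\ a\le b\text{ and }s\in N .
\end{align}
Granting \eqref{eq:arczero}, monotonicity of $h(\phi_1^{G_s},\cdot)$ under inclusion of compact subsets (a set which $(n,\delta')$-spans the larger set also $(n,\delta')$-spans the smaller one) gives $h(\phi_1^{G_s},B^+(x,\delta))\le h(\phi_1^{G_s},\phi^{G_s}_{[t_0-\epsilon,t_0+\epsilon]}(x))=0$ for every $x\in\Lambda_s(U)$; since dynamical balls of the restricted system are subsets of the corresponding balls in $M$, this shows $h_{loc}(\phi_1^{G_s}\mid\Lambda_s(U),\delta)=0$. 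As the pair $\epsilon,\delta>0$ produced by robust expansiveness is common to all $s\in N$, the same $\delta$ is an $h$-expansiveness constant for every member of the family, which is the uniformity asserted.

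To establish \eqref{eq:arczero} I would exhibit, for each $\delta'>0$, an $(n,\delta')$-spanning set of $J:=\phi^{G_s}_{[a,b]}(x)$ whose cardinality does not depend on $n$. Set $L:=\max\{1,\sup_{z\in M}\|G_s(z)\|\}<\infty$; since the orbit of $x$ has speed at most $L$, one has $d(\phi^{G_s}_\alpha x,\phi^{G_s}_\beta x)\le L|\alpha-\beta|$ for all $\alpha,\beta\in\RR$. Pick a finite $\tfrac{\delta'}{2L}$-net $a\le t_0<\cdots<t_m\le b$ of $[a,b]$, so that $m+1\le 1+2L(b-a)/\delta'$, and let $F:=\{\phi^{G_s}_{t_j}(x):0\le j\le m\}\subset J$. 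Given $y=\phi^{G_s}_\tau(x)\in J$, choose $j$ with $|\tau-t_j|\le\tfrac{\delta'}{2L}$; then for every $u\in\RR$
\begin{align*}
  d\big(\phi^{G_s}_u y,\ \phi^{G_s}_u(\phi^{G_s}_{t_j}x)\big)
  &= d\big(\phi^{G_s}_{u+\tau}x,\ \phi^{G_s}_{u+t_j}x\big) \\
  &\le L\,|\tau-t_j|\le\tfrac{\delta'}{2}<\delta' ,
\end{align*}
so $y\in B_n(\phi^{G_s}_{t_j}x,\delta')$ for every $n\ge1$. Hence $F$ is $(n,\delta')$-spanning for $J$ for all $n$, so $r_n(J,\delta')\le m+1$, whence $\limsup_{n\to\infty}\frac1n\log r_n(J,\delta')=0$; letting $\delta'\to0$ yields $h(\phi_1^{G_s},J)=0$, which is \eqref{eq:arczero}.

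Finally I would observe that, $\Lambda_s(U)$ being invariant, the arc $\phi^{G_s}_{[t_0-\epsilon,t_0+\epsilon]}(x)$ lies in $\Lambda_s(U)$, so every spanning set above lies in $\Lambda_s(U)$ and the conclusion is genuinely about the restricted map $\phi_1^{G_s}\mid\Lambda_s(U)$---the form in which it is fed into Theorem~\ref{thm:Bowen}. I do not foresee a serious obstacle: the only point requiring care is that the ``no entropy on an arc'' estimate be uniform in $n$ (uniformity in $s$ is unnecessary, since \eqref{eq:arczero} is proved for each $s$ separately), and this is automatic because the spanning set $F$ was built with no reference to $n$. The genuinely nontrivial ingredient---that robust expansiveness collapses $B^+(x,\delta)$ into a bounded orbit segment---has already been isolated in the proof of Theorem~\ref{mthm:statstabeqstates}.
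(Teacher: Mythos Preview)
Your proof is correct and follows the same line as the paper. The paper isolates the identical structural consequence of robust expansiveness---that $B^+(x,\delta)$ is contained in a bounded orbit segment $\phi^{G_s}_{[t_0-\epsilon,t_0+\epsilon]}(x)$---and then simply invokes \cite[Example~1.6]{Bowen72} for the fact that such a segment carries no entropy for the time-$1$ map; you have unpacked that citation explicitly via the uniform Lipschitz bound along orbits, which is exactly Bowen's argument.
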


Hence, item (4) of the statement of
Theorem~\ref{mthm:statstabeqstates} implies assumption (2b) of
Theorem~\ref{thm:conteqstate}, by using
Proposition~\ref{pr:robust-h-exp} together with
Theorem~\ref{thm:Bowen}.

Let then $s_n\in N$ be a sequence converging to $s\in N$ and $\mu_n$ a
physical measure supported in $\Lambda_{s_n}(U)$. Let $\mu$ be a
weak$^*$ accumulation point of $\mu_n$ when $n\to\infty$. To simplify
the notation we still write $\mu_n\to\mu$ (relabeling the indexes if
necessary). According to item (2) of
Theorem~\ref{mthm:statstabeqstates}, each $\mu_n$ is an equilibrium
state for $\psi_{s_n}$ with $\cP_{f_{s_n}}=0$, where
$f_{s_n}=\phi_1^{G_{s_n}}$.  From Theorem~\ref{thm:conteqstate} we
have that $\mu$ is an equilibrium state with respect to $\psi_s$.

From item (2) of Theorem~\ref{mthm:statstabeqstates} again, we have
that $\mu$ is a physical measure. Hence, by item (1) of
Theorem~\ref{mthm:statstabeqstates}, we have a Lebesgue modulo zero
decomposition
\begin{align*}
  B(\mu)\cap U=U\cap\big(\sum_{i} B(\mu)\cap B(\mu_i)\big).
\end{align*}
By definition of physical measure, for each continuous observable
$\vfi:U\to\RR$
\begin{align*}
  \int\vfi\,d\mu
  &=
    \frac1{\m(U)}\int_U \int\vfi\,
    d\left(\lim_{T\to+\infty}\frac1T\int_{0}^{T}\delta_{\phi^{G_s}_t x}\right)
    \,d\m(x)
  \\
  &=
    \sum_{i} \frac{\m(B(\mu)\cap B(\mu_i)\cap
    U)}{\m(U)}\int \vfi\,d\mu_i^s,
\end{align*}
where the limit above is in the weak$^*$ topology of the probability
measures of the manifold.  Thus we conclude that
$\mu=\sum_{i} \frac{\m(B(\mu)\cap B(\mu_i)\cap U)}{\m(U)}\mu_i$
and $\mu$ is a convex linear combination of the ergodic physical
measures supported in $\Lambda_s(U)$ provided by item (1).

This completes the proof of Theorem~\ref{mthm:statstabeqstates}.
\end{proof}

\begin{remark}\label{rmk:infiniteSRB}
  The statement of Theorem~\ref{mthm:statstabeqstates} can be somewhat
  generalized by extending item (1) to admit a countable family of
  ergodic physical probability measures; and extending item (4) to
  require robust $h$-expansiveness of the family of dynamics.
\end{remark}

%%%%%%%%%%%%%%%%%%%%%%%%%%%%%%%%%%%%%%%%%%%%%%%%%%

\bibliographystyle{abbrv}

\bibliography{../../Trabalho/bibliobase/bibliography}

\end{document}